\documentclass[11pt,oneside]{amsart} 
\usepackage{amsmath, amssymb, verbatim, amscd, amsthm, mathrsfs, mathtools}
\usepackage{color, graphicx, shortvrb}
\usepackage{enumerate}
\usepackage[all]{xy}
\usepackage{mathtools}

\numberwithin{equation}{section}
\theoremstyle{definition} 
\newtheorem{defn}{Definition}[section]

\theoremstyle{plain}
\newtheorem{thm}[defn]{Theorem}

\newtheorem{lem}[defn]{Lemma}
\newtheorem{prop}[defn]{Proposition}

\newtheorem*{rem*}{Remark}

\usepackage[normalem]{ulem}
\usepackage{marginnote}
\usepackage[top=30truemm,bottom=25truemm,left=35truemm,right=35truemm]{geometry}
\newcommand{\q}{\quad} 
 
\newcommand{\qbox}[1]{\q \mbox{#1} \q}

\newcommand{\C}{\mathbb C} 
\newcommand{\N}{\mathbb N} 
\newcommand{\R}{\mathbb R} 
\newcommand{\K}{\mathbb{K}}
\newcommand{\T}{\mathbb{T}}
\newcommand{\F}{\mathcal{F}}
\newcommand{\G}{\mathcal{G}}

\def\Q{\mathbb{Q}} 
\def\lam{\lambda} 

\newcommand{\A}{\mathcal{A}}
\newcommand{\B}{\mathcal{B}}
\newcommand{\supp}{\mathrm{supp}}
\newcommand{\ol}{\overline} 
\newcommand{\set}[1]{\left\{ #1  \right\}}
\newcommand{\norm}[1]{\left\lVert #1 \right\rVert}
\renewcommand{\Re}{\mathrm{Re}}
\renewcommand{\Im}{\mathrm{Im}}

\newcommand{\cx}{C_0(X,\K)}
\newcommand{\cy}{C_0(Y,\K)}

\newcommand{\cxr}{C_0(X,\R)}
\newcommand{\cyr}{C_0(Y,\R)}

\newcommand{\cxc}{C_0(X,\C)}
\newcommand{\cyc}{C_0(Y,\C)}

\newcommand{\coz}{\mathrm{coz}}

\newcommand{\X}{\mathcal{X}}
\newcommand{\Y}{\mathcal{Y}}
\newcommand{\xinf}{x_\infty}
\newcommand{\yinf}{y_\infty}
\newcommand{\HT}{T_0}
\renewcommand{\norm}[1]{\|#1\|}
\def\ap{\alpha_p}

\def\bp{\beta_p}

\author[N.~Shibata]{Natsumi Shibata}
\address[N. Shibata]{Graduate School of Science and Technology,
Niigata University,
Niigata 950-2181, Japan}
\email{f25a056h@mail.cc.niigata-u.ac.jp}

\author[I.~Matsuzaki]{Izuho Matsuzaki}
\address[I.~Matsuzaki]
{Graduate School of Science and Technology,
Niigata University, Niigata 950-2181, Japan}
\email{matsuzaki@m.sc.niigata-u.ac.jp}

\author[T. Miura]{Takeshi Miura}
\address[T. Miura]
{Department of Mathematics,
Faculty of Science,
Niigata University,
Niigata 950-2181, Japan}
\email{miura@math.sc.niigata-u.ac.jp}

\title[Ring isomorphisms in norm]
{Ring isomorphisms in norm between
Banach algebras of continuous functions}

\begin{document}

\keywords{Banach--Stone theorem,
Gelfand--Kolmogoroff theorem,
weighted composition operator,
ring isomorphism}
\subjclass[2020]{46E25, 46B04, 46J10}

\begin{abstract}
Let $X$ and $Y$ be locally compact Hausdorff spaces and let
$\K\in\set{\R,\C}$.  We say that a bijection
$T\colon \cx\to\cy$ is a \textit{ring isomorphism in norm} if
\[
\norm{T(f+g)}=\norm{T(f)+T(g)},\qquad
\norm{T(fg)}=\norm{T(f)T(g)}
\]
for every $f,g\in\cx$.  We determine the form of such maps.
When $\K=\C$, under the additional assumption that
$\norm{T(\ol f)}=\norm{T(f)}$ for every $f\in\cxc$, there exist
a continuous function
$w\colon Y\to\set{\lam\in\C:|\lam|=1}$, a homeomorphism
$\varphi\colon Y\to X$, and a closed and open subset
$Y_0\subset Y$ such that
\[
T(f)(y)=
\begin{cases}
w(y)f(\varphi(y)),& y\in Y_0,\\
w(y)\ol{f(\varphi(y))},& y\in Y\setminus Y_0,
\end{cases}
\]
for every $f\in\cxc$ and $y\in Y$.  When $\K=\R$, there exist
a continuous function $w\colon Y\to\set{\pm1}$ and a homeomorphism
$\varphi\colon Y\to X$ such that
\[
T(f)(y)=w(y)f(\varphi(y))
\]
for every $f\in\cxr$ and $y\in Y$.  In particular, the real case
extends the norm version of the Gelfand--Kolmogoroff theorem to the
locally compact setting.
\end{abstract}

\maketitle


\section{Introduction and main results}

In the classical setting where $X$ and $Y$ are compact Hausdorff
spaces, the Banach--Stone theorem \cite{banach,stone} asserts that
every surjective real-linear
isometry from $C(X,\R)$ onto $C(Y,\R)$ is represented
as a weighted composition operator.
On the other hand, the
Gelfand--Kolmogoroff theorem \cite{GelfandKolmogoroff} shows that
every ring isomorphism from $C(X,\R)$ onto $C(Y,\R)$ is induced by a
homeomorphism between the underlying spaces.  More generally, ring
isomorphisms of Banach algebras are also related to classical
automatic continuity questions; see, for example, Kaplansky
\cite{kaplansky}.  Thus maps between spaces of continuous functions
which preserve either the metric structure or the ring structure have
been studied for a long time, and such maps often determine the
topology of the underlying spaces.

Dong, Lin and Zheng \cite{generalizationofGelfandKolmogoroff}
introduced a geometric version of ring isomorphisms on $C(X,\R)$.
In their setting, one does not assume that a bijection $T$ satisfies
\[
T(f+g)=T(f)+T(g)
\quad\text{or}\quad
T(fg)=T(f)T(g).
\]
Instead, the algebraic identities are replaced by the corresponding
norm identities
\[
\norm{T(f+g)}=\norm{T(f)+T(g)},\qquad
\norm{T(fg)}=\norm{T(f)T(g)}
\qquad(f,g\in C(X,\R)).
\]
A bijection satisfying these two conditions is called
a \textit{ring isomorphism in norm}.
Under these assumptions,
$T$ is not assumed to preserve
the metric structure or the ring structure in the usual sense.
Thus the problem is not a direct application of either the
Banach--Stone theorem or the Gelfand--Kolmogoroff theorem; the linear
and algebraic structures have to be recovered from norm identities.
Nevertheless, Dong, Lin and Zheng
\cite{generalizationofGelfandKolmogoroff} proved that such maps on
$C(X,\R)$ are still forced to be weighted composition operators.
The complex-valued case for compact Hausdorff spaces was studied in
\cite{taira}, where ring isomorphisms in norm satisfying the stronger
condition of preserving complex conjugation,
$T(\ol{f})=\ol{T(f)}$, were considered.

The purpose of this paper is to study ring isomorphisms in norm
between spaces $C_0(X,\K)$ and $C_0(Y,\K)$,
where $X$ and $Y$ are locally compact
Hausdorff spaces and $\K=\R$ or $\C$.  There are two additional
difficulties in this setting.  First, if $X$ is not compact, then
$C_0(X,\K)$ does not necessarily have a unit, and hence the arguments
used in the compact case cannot be transferred directly.  In
particular, the boundedness of the map is no longer immediate.
Second, the complex case requires some compatibility with complex
conjugation.  This difficulty is already visible at the scalar level:
when $X$ consists of a single point, the problem involves maps on
$\C$, where pathological behavior may occur without additional
assumptions; see, for example, \cite{charnow,kestelman}.
In the complex case, we impose the following natural norm condition:
\[
\norm{T(\overline f)}=\norm{T(f)}
\qquad(f\in C_0(X,\C)).
\]
This is weaker than the conjugation-preserving condition considered in
\cite{taira}.

We now state the main results of this paper.  Let $X$ and $Y$ be
non-empty locally compact Hausdorff spaces, and let
$\K=\R$ or $\C$.  We denote by $C_0(X,\K)$ the Banach algebra of all
continuous $\K$-valued functions on $X$ vanishing at infinity,
equipped with the supremum norm $\norm{\cdot}$.

A bijection $T\colon C_0(X,\K)\to C_0(Y,\K)$ is called a
\textit{ring isomorphism in norm} if it satisfies
\begin{equation*}
\norm{T(f+g)}=\norm{T(f)+T(g)},\qquad
\norm{T(fg)}=\norm{T(f)T(g)}
\end{equation*}
for every $f,g\in C_0(X,\K)$.

We write
\[
\T=\{\lambda\in\C:|\lambda|=1\}.
\]

\begin{thm}\label{thm:complex}
Let $T\colon \cxc\to\cyc$ be a ring isomorphism in norm.
Assume that
\[
\norm{T(\ol{f})}=\norm{T(f)}
\qquad(f\in C_0(X,\C)).
\]
Then there exist a continuous function
$w\colon Y\to\T$, a homeomorphism
$\varphi\colon Y\to X$, and a closed and open subset
$Y_0\subset Y$ such that
\begin{equation*}
T(f)(y)=
\begin{cases}
w(y)f(\varphi(y)),&y\in Y_0,\\
w(y)\ol{f(\varphi(y))},&y\in Y\setminus Y_0,
\end{cases}
\end{equation*}
for every $f\in\cxc$ and $y\in Y$.

Conversely, every map of this form
is a ring isomorphism in norm
and satisfies the above conjugation condition.
\end{thm}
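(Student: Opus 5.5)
The plan is to recover enough linear and multiplicative structure from the norm identities to reach a real-linear isometry, then apply a Banach--Stone type theorem for $C_0$ spaces. The converse direction is a direct check: for $T$ of the stated form, $|T(f)(y)|=|f(\varphi(y))|$, and pointwise one has $T(f+g)=T(f)+T(g)$ and $T(fg)(y)=T(f)(y)T(g)(y)\,w(y)^{-1}$. Since $|w|=1$, both norm identities hold, and the conjugation condition is immediate. Continuity of $T(f)$ uses that $Y_0$ is clopen, and bijectivity comes from the explicit inverse.

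For the forward direction, I would proceed as follows.

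\textbf{Step 1: basic values.} Putting $f=g=0$ in the additive identity gives $\norm{T(0)}=\norm{2T(0)}$, so $T(0)=0$. Next, taking $g=-f$ gives $\norm{T(f)+T(-f)}=0$, so $T(-f)=-T(f)$.

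\textbf{Step 2: real-linearity and isometry.} From $\norm{T(f)-T(g)}=\norm{T(f)+T(-g)}=\norm{T(f-g)}$ we see that $T$ preserves distances exactly when $\norm{T(h)}=\norm{h}$ for all $h$. So the key intermediate goal is
\[
\norm{T(f)}=\norm{f}\qquad (f\in\cxc).
\]
Once this holds, $T$ is a surjective isometry with $T(0)=0$, hence real-linear by Mazur--Ulam. The complex Banach--Stone theorem for real-linear isometries of $C_0$ spaces then yields exactly the displayed form, with the clopen set $Y_0$ arising from the real-linear (non-complex-linear) case. The hypothesis $\norm{T(\ol f)}=\norm{T(f)}$ is needed precisely here and in Step 3, to control the scalar pathology on $\C$ (for instance when $X$ is a point, so that $T$ is a map $\C\to\C$).

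\textbf{Step 3: the obstacle.} Proving $\norm{T(f)}=\norm{f}$ is the main difficulty, since there is no unit and no a priori boundedness. My plan for this step is:
\begin{enumerate}[(a)]
\item Let $N(f)=\norm{T(f)}$. The multiplicative identity gives $N(f^n)=\norm{T(f)^{n}}\cdots$. More usefully, $N(fg)=\norm{T(f)T(g)}\le N(f)N(g)$, and for $g$ with $fg=f$ we get $N(f)\le\norm{T(f)T(g)}$.
\item Use the subadditivity $N(f+g)\le N(f)+N(g)$ and $N(-f)=N(f)$. By induction, $N(nf)=nN(f)$ on integers, since $T(nf)=nT(f)$ holds in norm along $\norm{T(nf)-nT(f)}$ arguments.
\item From these, $N$ is a submultiplicative seminorm-like functional on $\cxc$. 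Using the hypothesis $N(\ol f)=N(f)$ together with $N(f\ol f)=\norm{T(f)T(\ol f)}$, I aim to derive $N(|f|^2)=N(f)^2$.
\item Rescale: $\rho(f)=N(f)$ restricted to nonnegative real functions should be multiplicative on powers, i.e. $N(f^n)=N(f)^n$. Combined with a spectral-radius argument, this gives $N(f)=\norm{f}^{c}$, and additivity along integers forces $c=1$.
\end{enumerate}
I expect step (c) to be the hardest part, and I would first try to establish $N(f^2)=N(f)^2$ via $f=g$ in the product identity, which gives $N(f^2)=\norm{T(f)^2}=N(f)^2$ directly. That identity in turn lets one identify peak points of $T(f)$ with those of $f$.
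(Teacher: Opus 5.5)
\textbf{Where the proposal has a gap.} Your Steps 1 and 2 are correct, and the reduction in Step 2 is a nice one. Since $\|T(f)-T(g)\|=\|T(f-g)\|$, once you know $\|T(h)\|=\|h\|$ the map is a surjective isometry fixing $0$. Mazur--Ulam then gives real-linearity without Tabor's additivity theorem, and the real-linear Banach--Stone theorem for $C_0(X,\C)$ finishes exactly as in the paper. The converse check is also fine.

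The problem is Step 3, which is the whole content of the theorem, and your plan for it does not work.
\begin{itemize}
\item The identity $N(f^{2^n})=N(f)^{2^n}$ gives $N(f)\le\|f\|$ only if you already have an a priori bound $N(g)\le M\|g\|$; then $N(f)\le M^{2^{-n}}\|f\|\to\|f\|$.
\item The reverse inequality needs a bound for $T^{-1}$. You cannot get it by symmetry, because $T^{-1}$ is not known to satisfy the product norm identity. The paper uses the open mapping theorem, after boundedness and real-linearity are established.
\item Nothing in (a)--(d) produces such a bound. The claim that a spectral-radius argument yields $N(f)=\|f\|^c$ has no basis, since nothing forces $N(f)$ to depend only on $\|f\|$.
\end{itemize}

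The one-point case shows the danger. Any discontinuous field automorphism $\tau$ of $\C$ is a ring isomorphism in norm on $C_0(\{x\},\C)\cong\C$. The functional $N=|\tau(\cdot)|$ has every property listed in (a), (b), and $N(f^2)=N(f)^2$, yet it is not of the form $|\cdot|^c$. Only the conjugation hypothesis rules $\tau$ out, and your step (c) does not show how. A priori $\|T(f)T(\overline{f})\|$ may be strictly smaller than $\|T(f)\|\,\|T(\overline{f})\|$, so $N(|f|^2)=N(f)^2$ is unjustified. Even in the scalar case you would still have to show that the normalized map sends $\R$ into $\R$ and is monotone there.

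\textbf{The missing idea: automatic continuity.} The paper supplies this in several steps.
\begin{itemize}
\item It transfers $T$ to $T_0$ between the ideals $A$ and $B$ of functions vanishing at the points at infinity.
\item It shows $T_0$ and $T_0^{-1}$ are additive and separating: $fg=0$ forces $\|T_0(f)T_0(g)\|=\|T_0(0)\|=0$.
\item Supports then define a continuous map $\sigma\colon Y\to\mathcal{X}$. This is used to prove that each additive functional $f\mapsto T_0(f)(y)$ is bounded.
\item At an isolated point $p$, the functional reduces to an additive scalar map $\beta_p$ with $\beta_p(1)=1$ and $|\beta_p(\lambda^2)|=|\beta_p(\lambda)|^2$. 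Here the conjugation condition forces $\beta_p(\R)\subset\R$. Hence $\beta_p(s^2)=\beta_p(s)^2\ge0$, so $\beta_p$ is monotone, $\beta_p=\mathrm{id}$ on $\R$, and $|\beta_p(\lambda)|=|\lambda|$.
\item At an accumulation point, unboundedness would produce disjointly supported $a_k$ with $\sum\|a_k\|<\infty$ but $\|T_0(a_k)\|\to\infty$. This contradicts the separating property via $T_0(a_k)^2=T_0(a_k)T_0\bigl(\sum_j a_j\bigr)$.
\end{itemize}
Banach--Steinhaus then bounds $T_0$, and only at that point does the power trick you have in mind close the argument.
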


\begin{thm}\label{thm:real}
Let $T\colon \cxr\to\cyr$ be a ring isomorphism in norm.
Then there exist a continuous function
$w\colon Y\to\{\pm1\}$ and a homeomorphism
$\varphi\colon Y\to X$ such that
\begin{equation*}
T(f)(y)=w(y)f(\varphi(y))
\end{equation*}
for every $f\in\cxr$ and $y\in Y$.

Conversely, every map of this form
is a ring isomorphism in norm.
\end{thm}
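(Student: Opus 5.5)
The converse is a direct check. If $T(f)(y)=w(y)f(\varphi(y))$ with $w$ continuous into $\{\pm1\}$ and $\varphi$ a homeomorphism, then $T$ is additive. Since $w^2=1$, we have $T(f)T(g)=T(fg)\cdot w$ pointwise, so $|T(f)T(g)|=|T(fg)|$. Bijectivity follows from the inverse $g\mapsto (wg)\circ\varphi^{-1}$. So the substance is the forward direction. The plan is to recover additivity, homogeneity and the point structure from the two norm identities, in four steps.

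\textbf{Step 1: basic properties of $T$.} Taking $f=g=0$ in the additive identity gives $\norm{T(0)}=\norm{2T(0)}$, so $T(0)=0$. Taking $g=-f$ gives $\norm{T(f)+T(-f)}=0$, so $T(-f)=-T(f)$.

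The key tool is a norm characterization of disjointness and of peaking. For $h\in C_0(Y,\R)$ we have $\norm{T(f)T(g)}=0$ iff $T(f)T(g)=0$. The multiplicative identity therefore shows that $fg=0$ iff $T(f)T(g)=0$, so $T$ preserves disjointness in both directions. Next I would define supports via cozero sets: for $x\in X$, let $I_x$ be the family of $f$ vanishing on a neighbourhood of $x$. Using disjointness preservation, I expect each $x$ to correspond to a unique $y$. Concretely, the intersection over $f$ with $f(x)\ne0$ of the closures of $\coz(T(f))$ should be a singleton $\{y\}$. The argument is the standard one for disjointness preserving bijections (Jarosz-type): compactness of $\{|T(f)|\ge\varepsilon\}$ gives nonemptiness, and disjointness gives uniqueness. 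This defines $\psi\colon X\to Y$, and the same construction for $T^{-1}$ gives its inverse. Continuity follows from the support correspondence, so $\varphi=\psi^{-1}$ is a homeomorphism.

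\textbf{Step 2: a local norm principle.} I would show that $T(f)(y)=0$ iff $f(\varphi(y))=0$. More strongly, I would aim for $|T(f)(y)|=|f(\varphi(y))|$. To get this, I would compare $f$ with functions agreeing with it near a point. The additive identity together with disjointness gives
\[
\norm{T(f+g)}=\norm{T(f)+T(g)}=\max(\norm{T(f)},\norm{T(g)})
\]
for disjoint $f,g$, so $T$ acts locally in norm. From this I would extract a scalar function $\tau_y\colon\R\to\R$ with $|T(f)(y)|=\tau_y(|f(\varphi(y))|)$. The multiplicative identity makes $\tau_y$ multiplicative in the sense $\tau(st)=\tau(s)\tau(t)$, after localizing with functions that are constant near the point. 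The additive identity makes $\tau$ monotone and subadditive-compatible. A multiplicative, monotone bijection of $[0,\infty)$ is $t^p$, and additivity in norm forces $p=1$.

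\textbf{Step 3: signs, the main obstacle.} The hardest part is showing that $T(f)(y)=w(y)f(\varphi(y))$ with the sign $w(y)$ independent of $f$. Once $|T(f)|=|f\circ\varphi|$ is known, fix $y$ and suppose $f(\varphi(y))$ and $g(\varphi(y))$ are positive. Localizing with a bump function $u$ equal to $1$ near $\varphi(y)$, I would apply the additive identity to $uf$ and $ug$. If the signs of $T(uf)(y)$ and $T(ug)(y)$ differed, then $\norm{T(uf+ug)}$, which by Step 2 is at least the local value $f+g$ at $\varphi(y)$, would exceed $\norm{T(uf)+T(ug)}$ computed near $y$. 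This needs a careful localization in which the norms are attained near $y$; I would use peaking functions $u$ with $0\le u\le1$ and small support to force the sup norms to be attained essentially at $y$. This shows the sign pattern is consistent on positive values. Oddness of $T$ (Step 1) then handles negative values. Continuity of $w$ follows from evaluating $T(u)$ for a function $u$ positive near the point.

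\textbf{Step 4: conclusion.} Having $T(f)=w\cdot f\circ\varphi$ locally for each $y$, this holds globally, which yields the formula of Theorem~\ref{thm:real}.
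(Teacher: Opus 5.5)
\textbf{There is a genuine gap: you never prove that $T$ is additive, and every later step depends on it.} Your converse check and the facts $T(0)=0$, $T(-f)=-T(f)$ and $fg=0\iff T(f)T(g)=0$ are correct.

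The ``Jarosz-type'' support argument only works for linear, or at least additive, maps. To show that the relevant intersection of supports is a single point, and that $T(f)$ near $y$ depends only on $f$ near $\varphi(y)$, one splits $f=fg+(f-fg)$ and uses $T(f)=T(fg)+T(f-fg)$, as in Lemmas~\ref{lem:finiteintersection} and~\ref{unique}.

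Disjointness with respect to $0$ alone cannot produce a point map. The condition $fg=0$ depends only on the regular open sets $\mathrm{int}\,\ol{\coz(f)}$ and $\mathrm{int}\,\ol{\coz(g)}$. The regular open algebras of $[0,1]$ and of the Cantor set $K$ are isomorphic, since both are the completion of the countable atomless Boolean algebra. Hence there is a bijection $C([0,1],\R)\to C(K,\R)$ with $fg=0\iff T(f)T(g)=0$, even though $[0,1]\not\cong K$.

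The missing input is the result the paper takes from Tabor: a surjection with $\norm{T(f+g)}=\norm{T(f)+T(g)}$ is additive. Once you have additivity, the additive identity holds trivially. So your Step~3 cannot extract sign information from it. In the paper the signs come for free from the Banach--Stone theorem at the end.

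\textbf{Even with additivity, your Step~2 does not handle the real difficulty, which is automatic continuity.} Additivity and separation only show that $T(f)(y)$ depends on the germ of $f$ at $\varphi(y)$. To pass to a scalar function $\tau_y$ of the value $f(\varphi(y))$, you need the additive functional $f\mapsto T(f)(y)$ to be bounded, and additive maps need not be.

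Localizing with bump functions $u$ does not help. The norm identities only see sup norms, and you have no control of $T(uf)$ on the set where $0<u<1$. That is exactly where an unbounded functional could blow up.

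The paper proves this automatic continuity separately, in two cases:
\begin{itemize}
\item At an isolated point $p$, it reduces to a scalar additive map $\beta_p$ with $|\beta_p(\lam^2)|=|\beta_p(\lam)|^2$. This forces monotonicity and hence $\beta_p(s)=s$.
\item At an accumulation point, unboundedness would yield disjointly supported $a_k$ with $\sum_k\norm{a_k}<\infty$ and $\norm{T(a_k)}\to\infty$. This contradicts $\norm{T(a_k)}\le\norm{T(\sum_j a_j)}$, which follows from additivity and separation.
\end{itemize}
Banach--Steinhaus then gives boundedness and real-linearity. The identity $\norm{T(f^{2^n})}=\norm{T(f)}^{2^n}$, together with the open mapping theorem, shows that $T$ is an isometry. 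The classical Banach--Stone theorem then yields the formula.
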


In particular, Theorem~\ref{thm:real} extends the norm version of the
Gelfand--Kolmogoroff theorem of Dong, Lin and Zheng
\cite{generalizationofGelfandKolmogoroff} to the locally compact
setting.

We next describe the main idea of the proof.  The crucial point is to
prove the automatic boundedness of the map obtained from a ring
isomorphism in norm.  We pass to the one-point compactifications
$\X$ and $\Y$ of $X$ and $Y$, respectively, and identify
$C_0(X,\K)$ and $C_0(Y,\K)$ with the closed ideals
\[
A=\{f\in C(\X,\K): f(\xinf)=0\},
\qquad
B=\{u\in C(\Y,\K): u(\yinf)=0\}.
\]
The original map $T$ then induces a map $\HT\colon A\to B$.
We first show that $\HT$ and $\HT^{-1}$ are additive and separating.
Separating maps and related zero-product preserving maps have been
studied in connection with Banach--Stone type theorems and automatic
continuity; see, for example,
\cite{leungtang,fonthernandez,jarosz,keliwong}.
Most results in this direction concern linear maps.  In contrast, a
ring isomorphism in norm is not assumed to be linear a priori.

Using the separating property and the supports of functions in $A$,
we construct a support map from $Y$ into $\X$.  This map may take
values at the point at infinity, and this is one of the main
difficulties caused by the absence of a unit in $C_0(X,\K)$.
We then prove the boundedness of the
additive evaluation maps
\[
f\longmapsto \HT(f)(y)
\qquad(y\in Y).
\]
Once this boundedness is obtained, the Banach--Steinhaus theorem gives
the boundedness of $\HT$, and additivity implies real-linearity.  A
standard power argument using the norm identities shows that $\HT$ is
a surjective real-linear isometry.  The desired representations then
follow from the classical Banach--Stone theorem in the real case and,
in the complex case,
from a known Banach--Stone type theorem for uniformly closed
function algebras, applied here to $C_0(X,\C)$ and $C_0(Y,\C)$
\cite{miurareallinear}.

The rest of this paper is organized as follows.
In Section~\ref{sect:additive}, we show that the induced map and its
inverse are additive and separating.  We then use the supports of
functions to construct a map from $Y$ into the one-point
compactification of $X$.
In Section~\ref{sect:bounded}, we investigate the boundedness of
evaluation functionals and prove the boundedness of the induced map
$\HT$ under the assumptions of the main theorems.
In Section~\ref{sect:proof}, we complete the proofs of
Theorems~\ref{thm:complex} and~\ref{thm:real} by combining this
boundedness with known descriptions of surjective real-linear
isometries.

\section{Additive and separating maps}
\label{sect:additive}

In this section, we associate with the original map $T$ a map
$\HT$ between certain ideals of the algebras of continuous functions
on the one-point compactifications of $X$ and $Y$.
We show that $\HT$ and its inverse are additive and separating.
These properties will be used to construct, by means of supports,
a map from $Y$ into the one-point compactification of $X$.

Let $\A$ and $\B$ be Banach algebras over $\K$.
A bijection $S\colon\A\to\B$ is called a
\textit{ring isomorphism in norm} if
\[
\norm{S(a+b)}=\norm{S(a)+S(b)},\qquad
\norm{S(ab)}=\norm{S(a)S(b)}
\]
for every $a,b\in\A$.
A map $S\colon\A\to\B$ is said to be
\textit{separating} if
\[
ab=0
\quad\Longrightarrow\quad
S(a)S(b)=0
\]
for every $a,b\in\A$.

Since $X$ and $Y$ need not be compact,
it is convenient to work
on their one-point compactifications.
This allows us to use compactness
arguments while keeping track of the fact
that functions in $C_0(X,\K)$ vanish at infinity.

Let $\X$ and $\Y$ be the one-point compactifications
of $X$ and $Y$, respectively.
Let $\xinf\in\X$ and $\yinf\in\Y$ denote
the points at infinity.
When $X$ is compact,
we regard its one-point compactification
as the disjoint union $X\cup\{\xinf\}$,
where $\xinf$ is isolated.
The same convention is applied to $Y$.

Unless otherwise specified,
if $E$ is a subset of $\X$ or $\Y$,
then $\ol{E}$ denotes its closure
in the respective ambient compact
space.
When $\K=\C$, the symbol $\overline{f}$ denotes the pointwise
complex conjugate of a complex-valued function $f$.

We define $A\subset C(\X,\K)$
and $B\subset C(\Y,\K)$:
\[
A=\{f\in C(\X,\K):f(\xinf)=0\},
\qquad
B=\{u\in C(\Y,\K):u(\yinf)=0\}.
\]
Then $A$ and $B$ are both closed ideals
of $C(\X,\K)$ and $C(\Y,\K)$, respectively.
For each $f\in A$, we have $f|_X\in C_0(X,\K)$.
Conversely, for each $g\in C_0(X,\K)$,
we define
\[
\hat{g}(x)=
\begin{cases}
g(x),&x\in X,\\
0,&x=\xinf.
\end{cases}
\]
Since $g$ vanishes at infinity, $\hat{g}$ is continuous on $\X$.
Thus $\hat{g}\in A$ for all $g\in C_0(X,\K)$.
Each function in $C_0(X,\K)$ has a unique continuous extension
to $\X$ which vanishes at $\xinf$, and this identifies
$C_0(X,\K)$ isometrically with $A$.
The same observation applies to $C_0(Y,\K)$ and $B$.

Let $T\colon C_0(X,\K)\to C_0(Y,\K)$ be
a ring isomorphism in norm.
Thus $T$ satisfies the two norm identities above with
$\A=C_0(X,\K)$ and $\B=C_0(Y,\K)$.
Via the above identifications, $T$ induces
a map $\HT\colon A\to B$.
Explicitly, it is given by
\begin{equation}\label{HT}
\HT(f)(y)=
\begin{cases}
T(f|_X)(y),&y\in Y,\\
0,&y=\yinf,
\end{cases}
\qquad(f\in A).
\end{equation}
It follows directly from the definition of $\HT$ and from the norm
identities for $T$ that $\HT\colon A\to B$ satisfies
the same norm identities:
\[
\|\HT(f+g)\|
=\|\HT(f)+\HT(g)\|,\qquad
\|\HT(fg)\|
=\|\HT(f)\HT(g)\|
\qquad(f,g\in A).
\]

When $\K=\C$, we assume that
$\|T(\ol{f})\|=\|T(f)\|$
for every $f\in C_0(X,\C)$.
For each $f\in A$, we have $f|_X\in C_0(X,\C)$ and
\[
(\ol f)|_X=\overline{f|_X}.
\]
Hence, by the definition of $\HT$ and the conjugation condition for
$T$,
\[
\|\HT(\ol f)\|
=\|T(\overline{f|_X})\|
=\|T(f|_X)\|
=\|\HT(f)\|.
\]

The map $\HT$ is bijective.
Indeed, let $f,g\in A$ satisfy
$\HT(f)=\HT(g)$.
Then $T(f|_X)=T(g|_X)$ on $Y$
by definition.
Since $T$ is injective, we have
$f=g$ on $X$.
In addition, $f(\xinf)=0=g(\xinf)$,
since $f,g\in A$.
Thus $f=g$ on $\X$.
This shows that $\HT$ is injective.
Now we fix $u\in B$ to prove that
$\HT$ is surjective.
Since $u|_Y\in C_0(Y,\K)$,
the surjectivity of $T$ shows that
there exists $h\in C_0(X,\K)$
such that $T(h)=u|_Y$.
By the definition of $\HT$ and $B$,
we have $\HT(\hat{h})(\yinf)=0=u(\yinf)$.
Hence $\HT(\hat{h})=u$ on $\Y$.
This shows that $\HT$ is surjective.
Thus $\HT\colon A\to B$ is a bijection
satisfying the same norm identities as $T$.
In the complex case, it also satisfies
\[
\norm{\HT(\ol{f})}=\norm{\HT(f)}
\qquad(f\in A).
\]
In what follows, we regard $\HT$ as the induced ring isomorphism
in norm from $A$ onto $B$.
The original map $T$ will be recovered from $\HT$ through
the above identifications.

For $f\in A$, we set
\[
\coz(f)=\{x\in\X:f(x)\ne0\},
\]
and denote by $\supp(f)$
the closure of $\coz(f)$ in $\X$.
Since $f(\xinf)=0$, we have $\coz(f)\subset X$;
however, $\supp(f)$ may contain $\xinf$.
We use the same notation for functions
in $B$, where the closure is taken in $\Y$.

In what follows, we work with this induced map
$\HT\colon A\to B$.
Thus $\HT$ is a ring isomorphism in norm.
In the complex case, it also satisfies
\[
\norm{\HT(\ol{f})}=\norm{\HT(f)}
\qquad(f\in A).
\]

The first step is to recover algebraic information
from the two norm identities.
The following proposition shows that,
although $\HT$ is not assumed to be additive
or multiplicative, it is automatically
additive and separating.

\begin{prop}\label{AdditiveSeparating}
The maps $\HT$ and $\HT^{-1}$ are additive and separating.
\end{prop}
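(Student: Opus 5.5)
The plan is to obtain $\HT(0)=0$ and oddness first, deduce the separating property directly from the multiplicative identity, and then spend most of the effort on additivity, which is the real content of the proposition.

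\textbf{Step 1: zero and sign.} From the additive identity with $f=g=0$ we get $\norm{\HT(0)}=\norm{2\HT(0)}=2\norm{\HT(0)}$, so $\HT(0)=0$. Taking $g=-f$ then gives $\norm{\HT(f)+\HT(-f)}=\norm{\HT(0)}=0$, hence $\HT(-f)=-\HT(f)$ for all $f\in A$.

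\textbf{Step 2: separating property.} If $fg=0$, then
\[
\norm{\HT(f)\HT(g)}=\norm{\HT(fg)}=\norm{\HT(0)}=0,
\]
so $\HT$ is separating. For $\HT^{-1}$, let $uv=0$ in $B$ and put $f=\HT^{-1}(u)$, $g=\HT^{-1}(v)$. Then $\norm{\HT(fg)}=\norm{uv}=0$, so $\HT(fg)=0=\HT(0)$. Injectivity gives $fg=0$. The same transfer shows $\HT^{-1}$ is additive once $\HT$ is: if $u=\HT(f)$ and $v=\HT(g)$, then $\HT(f+g)=u+v$, so $\HT^{-1}(u+v)=f+g$.

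\textbf{Step 3: additivity of $\HT$ (the main obstacle).} Fix $f,g\in A$ and set $k=f+g$. By surjectivity choose $p\in A$ with $\HT(p)=\HT(k)-\HT(f)$. The goal is $p=g$.

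The norm identities alone relate only two terms at a time. They give $\norm{\HT(p)}=\norm{\HT(k)+\HT(-f)}=\norm{\HT(g)}$, and more generally $\norm{\HT(p)+\HT(q)}=\norm{\HT(p+q)}$ for every $q$. This equality of norms is not enough to force $p=g$, so I would localize with the multiplicative identity. The tools are:
\begin{itemize}
\item $\norm{\HT(r^{2^n})}=\norm{\HT(r)}^{2^n}$, obtained by iterating $\norm{\HT(r^2)}=\norm{\HT(r)}^2$;
\item the separating property from Step 2, which lets $\HT(h)$ act as a bump function near chosen points.
\end{itemize}

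Concretely, I would try to show that for every $h\in A$ with $0\le h\le1$,
\[
\norm{\HT(h)\HT(p)}=\norm{\HT(hp)}
\]
can be compared with $\norm{\HT(h)\HT(g)}$. Products of the form $\HT(h)\bigl(\HT(k)-\HT(f)\bigr)$ would be controlled by applying the additive identity to $hk$ and $-hf$, after first establishing a local distributive estimate
\[
\norm{\HT(h)\HT(a)+\HT(h)\HT(b)}=\norm{\HT(ha)+\HT(hb)}.
\]
I would attempt to prove this estimate by choosing $h$ with small support, so that the separating property confines $\HT(h)$ to a neighbourhood of a point.

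Letting the support of $h$ shrink to a point $x$ should give $|\HT(p)(y)|$ and $|\HT(p)(y)-\HT(g)(y)|$ pointwise. From there the aim is $\HT(p)-\HT(g)=0$, i.e.\ $\HT(p-g)=0$ via $\norm{\HT(p)-\HT(g)}=\norm{\HT(p-g)}$. Injectivity then gives $p=g$.

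Justifying the local distributive estimate, without linearity and with the point at infinity possibly in the support, is where I expect the real difficulty. If it fails, my fallback is to imitate the Dong--Lin--Zheng argument by working with the squares $\HT(r)^2$ and their norms.
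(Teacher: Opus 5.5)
Your Steps 1 and 2 are correct and agree with the paper: $\HT(0)=0$, oddness, the separating property of $\HT$ from the product identity, and the transfer of additivity and separation to $\HT^{-1}$ by bijectivity. The gap is Step 3. It is a plan rather than a proof, its central step is left open, and that step is circular. By the two norm identities, $\norm{\HT(ha)+\HT(hb)}=\norm{\HT(ha+hb)}=\norm{\HT(h)\HT(a+b)}$. So your ``local distributive estimate'' is equivalent to $\norm{\HT(h)\bigl(\HT(a)+\HT(b)\bigr)}=\norm{\HT(h)\HT(a+b)}$ for all $h$, which is just a weighted form of the additivity you are trying to prove. Taking $h$ with small support does nothing to produce it. Likewise, shrinking the support of $h$ to a point in order to read off values at a point $y\in Y$ presupposes two things you do not have: a correspondence between points of $Y$ and points of $\X$, and control of $\norm{\HT(h)}$. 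In the paper the point map $\sigma$ is built only by means of additivity (Lemmas~\ref{lem:finiteintersection} and~\ref{unique}), and norm control of $\HT$ comes only in Section~\ref{sect:bounded}.

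The missing idea is that the product identity is not needed for additivity at all. Your assessment that the additive norm identity cannot force $p=g$ is mistaken once that identity is used for all pairs together with \emph{surjectivity}. A surjective map $F$ from an abelian group onto a normed space satisfying $\norm{F(a+b)}=\norm{F(a)+F(b)}$ is additive; this is Tabor's Corollary~1, which the paper simply cites. Without surjectivity this fails: $t\mapsto(t,\sin t)$ into $\R^2$ with the max norm is a non-additive solution.

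One way to prove it is as follows.
\begin{enumerate}
\item By your Step 1, $\norm{\HT(a)-\HT(b)}=\norm{\HT(a-b)}$. So for fixed $x\in A$ the map $\psi_x=\HT\circ(\,\cdot+x)\circ\HT^{-1}$ is a surjective isometry of $B$.
\item By Mazur--Ulam, $\psi_x$ is affine: $\psi_x(v)=\HT(x)+L_xv$ with $L_x$ real-linear.
\item From $\psi_x\psi_y=\psi_{x+y}=\psi_y\psi_x$ you get $\HT(x+y)=\HT(x)+L_x\HT(y)=\HT(y)+L_y\HT(x)$ and $L_{x+y}=L_xL_y=L_yL_x$.
\item Oddness gives $\HT(x)-L_x\HT(x)=\psi_x(\HT(-x))=0$, so $L_x\HT(x)=\HT(x)$ and hence $\HT(2x)=2\HT(x)$.
\item Apply the same fixed-point fact to $x+y$ and expand $L_{x+y}\HT(x+y)=\HT(x+y)$ with both formulas from item 3. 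This gives $L_x^2\HT(y)=\HT(y)$, so $L_x^2=I$ by surjectivity.
\item Then $L_{2x}=I$, and $2\HT(x+y)=\HT(2x+2y)=\HT(2x)+\HT(2y)=2\HT(x)+2\HT(y)$.
\end{enumerate}
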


\begin{proof}
By the additive norm identity for $\HT$, we have 
$\norm{\HT(f+g)}
=\norm{\HT(f)+\HT(g)}$
for all $f,g\in A$.
Hence, by \cite[Corollary~1]{Tabor}, $\HT$ is additive.
In particular, $\HT(0)=0$.

We prove that $\HT$ is a separating map.
That is,
\[
    fg= 0
    \quad\Longrightarrow\quad
    \HT(f)\HT(g)= 0
\]
for all $f,g\in A$.
Let $f,g\in A$.
If $fg=0$, then,
by the product norm identity for $\HT$,
\[
\norm{\HT(f)\HT(g)}
    =\norm{\HT(fg)}
    =\norm{\HT(0)}
    =0.
    \]
Hence 
    $\HT(f)\HT(g)=0$.
Thus $\HT$ is a separating map.

Since $\HT$ is additive and bijective,
its inverse $\HT^{-1}$ is also additive.
We prove that $\HT^{-1}$ is separating.
Let $u,v\in B$ with $uv=0$.
Since $\HT$ is surjective, there exist
$f_0,g_0\in A$ such that
$\HT(f_0)=u$ and $\HT(g_0)=v$.
Then,
by the product norm identity for $\HT$,
we have
\[
\norm{\HT(f_0g_0)}
=\norm{\HT(f_0)\HT(g_0)}
=\norm{uv}=0.
\]
Thus $\HT(f_0g_0)=0$.
Since $\HT$ is injective, we obtain
$f_0g_0=0$.
Therefore $\HT^{-1}(u)\HT^{-1}(v)=0$.
Hence $\HT^{-1}$ is also
a separating map.
\end{proof}

By Proposition~\ref{AdditiveSeparating},
$\HT$ is additive.  Hence $\HT$ is $\Q$-linear, that is,
\[
\HT(qf)=q\HT(f)
\qquad(q\in\Q,\ f\in A).
\]

We next use the separating property to localize functions at points
of $Y$.  For each $y\in Y$, we consider the family of functions whose
images under $\HT$ do not vanish at $y$.
For each $y\in Y$, define 
\[
\F_y
=
\{f\in A:\HT(f)(y)\neq0\}.
\]
Let $y\in Y$.
By Urysohn's lemma, there exists
$u\in C(\Y,\R)$ such that
$u(y)=1$ and $u(\yinf)=0$.
Thus $u\in B$.
Since $\HT$ is surjective,
there exists
$f\in A$ such that $\HT(f)=u$.
Hence $\HT(f)(y)=u(y)=1$.
Thus $\F_y\neq\emptyset$
for every $y\in Y$.

The next lemma is a key step
to show that the intersection of
supports of functions in $\F_y$
is non-empty.

\begin{lem}\label{lem:finiteintersection}
Let $f_1,\dots,f_n$ be finitely many
elements of $A$
such that $\bigcap_{j=1}^n\supp(f_j)=\emptyset$.
For each $y\in Y$ there exists $k\in\{1,\dots,n\}$
such that $f_k\notin\F_y$.
\end{lem}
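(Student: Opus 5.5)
The plan is to argue by contradiction, combining a partition of unity on the compact space $\X$ with the additivity and separating property of $\HT$ from Proposition~\ref{AdditiveSeparating}. Fix $y\in Y$ and suppose, contrary to the claim, that $f_k\in\F_y$ for every $k$, that is, $\HT(f_k)(y)\neq0$ for $k=1,\dots,n$. I will show that this forces $\HT(g)(y)=0$ for every $g\in A$. That contradicts the surjectivity of $\HT$, since we saw above that $\F_y\neq\emptyset$.

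\emph{Step 1: the partition of unity.} Put $U_j=\X\setminus\supp(f_j)$. Each $U_j$ is open, and the hypothesis $\bigcap_{j=1}^n\supp(f_j)=\emptyset$ says exactly that $U_1,\dots,U_n$ cover $\X$. Because $\X$ is compact Hausdorff, hence normal, there is a continuous partition of unity subordinate to this finite cover. Concretely, I will use the shrinking lemma to get closed sets $F_j\subset U_j$ that still cover $\X$. Then Urysohn's lemma gives functions $k_j\in C(\X,[0,1])$ with $k_j=1$ on $F_j$ and $\supp(k_j)\subset U_j$. Finally set $h_j=k_j/\sum_i k_i$, where the denominator is at least $1$ everywhere. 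This yields $h_j\in C(\X,\R)$ with $\sum_j h_j=1$ on $\X$ and $h_j=0$ on $\supp(f_j)$. In particular $h_jf_j=0$. If $\K=\C$, the $h_j$ are simply regarded as complex-valued.

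\emph{Step 2: decomposing an arbitrary $g$.} Let $g\in A$. Since $A$ is an ideal of $C(\X,\K)$, each $gh_j$ lies in $A$, and $g=\sum_{j=1}^n gh_j$. Moreover $(gh_j)f_j=g(h_jf_j)=0$. The separating property of $\HT$ then gives $\HT(gh_j)\HT(f_j)=0$, and evaluating at $y$ with $\HT(f_j)(y)\neq0$ yields $\HT(gh_j)(y)=0$. By additivity of $\HT$,
\[
\HT(g)(y)=\sum_{j=1}^n\HT(gh_j)(y)=0 .
\]

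\emph{Step 3: contradiction.} Step 2 holds for every $g\in A$, so $\F_y=\emptyset$. This contradicts the fact, established above via Urysohn's lemma and the surjectivity of $\HT$, that $\F_y\neq\emptyset$. Hence some $f_k\notin\F_y$.

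The only point needing care is Step 1. The partition of unity must be built on $\X$ rather than on $X$, because $\supp(f_j)$ is taken in $\X$ and may contain $\xinf$. The $h_j$ themselves need not vanish at $\xinf$. This is harmless because only the products $gh_j$ are fed into $\HT$, and these lie in $A$ by the ideal property. I expect no further obstacle; the remaining steps are direct applications of Proposition~\ref{AdditiveSeparating}.
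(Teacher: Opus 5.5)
Your proof is correct and is essentially the paper's argument: a partition of unity on $\X$ subordinate to the complements $\X\setminus\supp(f_j)$, followed by the separating property and additivity of $\HT$. The only difference is presentational. The paper decomposes $f_1$ itself and splits into cases instead of arguing by contradiction with an arbitrary $g$. Taking $g=f_1$ in your Step~2 would also let you drop the appeal to $\F_y\neq\emptyset$, since it directly contradicts $f_1\in\F_y$.
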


\begin{proof}
Set $U_j=\X\setminus\supp(f_j)$
for $j=1,\dots,n$.
Since $\bigcap_{j=1}^n\supp(f_j)=\emptyset$,
we have $\bigcup_{j=1}^nU_j=\X$.
There exists a partition of unity
$g_1,\dots,g_n\in C(\X,\R)$
subordinate to this cover, such that
\[
\sum_{j=1}^ng_j=1,\qquad
\supp(g_j)\subset U_j
\qquad(j=1,\dots,n).
\]
Thus
$f_1=f_1\sum_{j=1}^ng_j
=\sum_{j=1}^nf_1g_j$.
Since $g_j\in C(\X,\K)$ and
$A$ is an ideal of $C(\X,\K)$,
we have $f_1g_j\in A$
for $j=1,\dots,n$.
The additivity of $\HT$ yields
\[
\HT(f_1)
=\HT\left(\sum_{j=1}^nf_1g_j\right)
=\sum_{j=1}^n\HT(f_1g_j).
\]

Now we fix an arbitrary $y\in Y$.
If $\HT(f_1g_j)(y)=0$ for
$j=1,\dots,n$,
then $\HT(f_1)(y)=0$.
This shows that $f_1\notin\F_y$.

It remains to consider the case where
$\HT(f_1g_k)(y)\neq0$ for some
$k\in\{1,\dots,n\}$.
For such $k$, we have
\[
\supp(f_1g_k)
\subset\supp(g_k)
\subset U_k
=\X\setminus\supp(f_k).
\]
This shows that
\[
\supp(f_1g_k)\cap\supp(f_k)
=\emptyset.
\]
Thus $(f_1g_k)f_k=0$.
Since $\HT$ is separating
by Proposition~\ref{AdditiveSeparating},
we have
$\HT(f_1g_k)\HT(f_k)=0$.
Hence
\[
\HT(f_1g_k)(y)\HT(f_k)(y)=0.
\]
By assumption,
$\HT(f_1g_k)(y)\neq0$,
which shows $\HT(f_k)(y)=0$.
Thus $f_k\notin\F_y$.
\end{proof}

The following lemmas show that
this family determines a unique point
of the one-point compactification of $X$.
This point will be denoted
by $\sigma(y)$.

\begin{lem}\label{nonempty}
For every $y\in Y$, the intersection
$\bigcap_{f\in\F_y}\supp(f)$ is non-empty.    
\end{lem}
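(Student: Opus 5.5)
We argue by compactness of $\X$ together with Lemma~\ref{lem:finiteintersection}. Fix $y\in Y$. Each $\supp(f)$ with $f\in\F_y$ is, by definition, a closed subset of the compact Hausdorff space $\X$. We also know from the discussion preceding Lemma~\ref{lem:finiteintersection} that $\F_y\neq\emptyset$. So the family $\{\supp(f):f\in\F_y\}$ is a non-empty family of closed subsets of a compact space. To show that its intersection is non-empty, it suffices to check the finite intersection property.

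Suppose, toward a contradiction, that $\bigcap_{f\in\F_y}\supp(f)=\emptyset$. Then the open sets $\X\setminus\supp(f)$, $f\in\F_y$, cover $\X$. By compactness there exist finitely many $f_1,\dots,f_n\in\F_y$ such that
\[
\bigcap_{j=1}^n\supp(f_j)=\emptyset .
\]
Lemma~\ref{lem:finiteintersection}, applied to $f_1,\dots,f_n$ and the point $y$, yields some $k\in\{1,\dots,n\}$ with $f_k\notin\F_y$. This contradicts the choice $f_k\in\F_y$. Hence the intersection is non-empty.

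There is essentially no obstacle here: the substantive work, namely recovering a partition-of-unity decomposition via additivity and using the separating property, was already done in Lemma~\ref{lem:finiteintersection}. The only points to note are these. First, the supports are taken in $\X$ rather than $X$, which is exactly why compactness applies even when $X$ is not compact; the point $\xinf$ may then lie in the intersection. Second, non-emptiness of $\F_y$ matters, since an empty family would give the whole space trivially but not meaningfully; it is guaranteed by the surjectivity argument given earlier.
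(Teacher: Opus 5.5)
Your proposal is correct and follows the paper's proof: you use compactness of $\X$ and the finite intersection property for the closed sets $\supp(f)$, $f\in\F_y$, with Lemma~\ref{lem:finiteintersection} (applied contrapositively) showing that every finite subfamily has non-empty intersection. Your side remark on $\F_y\neq\emptyset$ is harmless but not needed here, since under the usual convention an empty intersection would be all of $\X$. It does matter for Lemma~\ref{unique}, though.
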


\begin{proof}
Since $\X$ is compact, it is sufficient to prove
that the family $\{\supp(f):f\in\F_y\}$ has
the finite intersection property.
Let $f_1,\dots,f_n$ be finitely many
elements of $\F_y$.
Applying Lemma~\ref{lem:finiteintersection}
contrapositively, we obtain
$\bigcap_{j=1}^n\supp(f_j)\neq\emptyset$.
Therefore the family $\{\supp(f):f\in\F_y\}$ has
the finite intersection property.
\end{proof}

\begin{lem}\label{unique}
    For every $y\in Y$, the intersection
$\bigcap_{f\in \F_y}\supp(f)$ is a singleton. 
\end{lem}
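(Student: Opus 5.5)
By Lemma~\ref{nonempty}, the intersection $\bigcap_{f\in\F_y}\supp(f)$ is non-empty. It therefore suffices to show that it cannot contain two distinct points. The plan is to argue by contradiction: assume $x_1\neq x_2$ both lie in the intersection, and produce a function in $\F_y$ whose support misses one of them.

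Since $\X$ is compact Hausdorff, choose open neighbourhoods $V_1\ni x_1$ and $V_2\ni x_2$ in $\X$ with $\ol{V_1}\cap\ol{V_2}=\emptyset$. By Urysohn's lemma, take $g_1,g_2,g_3\in C(\X,\R)$ with $0\le g_i\le1$ and $g_1+g_2+g_3=1$, arranged so that
\[
\supp(g_1)\subset V_1,\qquad \supp(g_2)\subset V_2,\qquad \supp(g_3)\cap\{x_1,x_2\}=\emptyset .
\]
A partition of unity subordinate to the open cover $\{V_1,\,V_2,\,\X\setminus(\ol{W_1}\cup\ol{W_2})\}$ does this, where $W_i\ni x_i$ are smaller neighbourhoods with $\ol{W_i}\subset V_i$.

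Now fix any $f\in\F_y$; such an $f$ exists because $\F_y\ne\emptyset$. We have $f=fg_1+fg_2+fg_3$ with each $fg_i\in A$, since $A$ is an ideal. Additivity of $\HT$ (Proposition~\ref{AdditiveSeparating}) gives
\[
\HT(f)(y)=\sum_{i=1}^3\HT(fg_i)(y)\ne0 ,
\]
so $fg_i\in\F_y$ for some $i$.

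We check each case against the assumption on $x_1,x_2$.
\begin{itemize}
\item If $i=1$, then $\supp(fg_1)\subset\ol{V_1}$, which does not contain $x_2$. This contradicts $x_2\in\bigcap_{f\in\F_y}\supp(f)$.
\item If $i=2$, the same argument with the roles exchanged shows $x_1\notin\supp(fg_2)$, again a contradiction.
\item If $i=3$, then $\supp(fg_3)\subset\supp(g_3)$ misses both $x_1$ and $x_2$, a contradiction.
\end{itemize}
Hence the intersection is a singleton.

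The only point needing care is the construction of the partition of unity so that no piece's support contains both points. The third piece must avoid $x_1$ and $x_2$ in its closed support, not merely have $g_3(x_i)=0$; this is exactly why the shrunken neighbourhoods $W_i$ are used. The point $\xinf$ plays no special role here, since the argument runs entirely in the compact space $\X$.
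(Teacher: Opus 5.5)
Your proof is correct and uses the same mechanism as the paper: decompose a fixed element of $\F_y$ with cutoff functions, use additivity of $\HT$ to force one piece into $\F_y$, and read off where that piece's support lies. The paper splits into two pieces, $f_0=f_0g+(f_0-f_0g)$, around a single point $x_0$ of the intersection, discards the second piece because its support misses $x_0$, and concludes that the intersection lies in every neighbourhood of $x_0$; your three-piece partition of unity instead argues by contradiction from two points, and the third piece is harmless but not needed.
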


\begin{proof}
By Lemma~\ref{nonempty},
there exists $x_0\in\bigcap_{f\in\F_y}\supp(f)$.
Let $U$ be an arbitrary open neighborhood
of $x_0$ in $\X$.
Since $\X$ is compact Hausdorff,
there exists an open neighborhood $V$ of $x_0$
with $x_0\in V\subset \ol{V}\subset U$.
By Urysohn's lemma,
there exists $g\in C(\X,\R)$ such that 
\begin{equation*}
\supp(g)\subset U,\qquad
g=1\qbox{on}\ol{V}.
\end{equation*}
Since $\F_y\neq\emptyset$,
choose $f_0\in\F_y$.
Then $f_0\in A$ and
\[
\HT(f_0)(y)\neq0.
\]

Since $g\in C(\X,\K)$ and
$A$ is an ideal of $C(\X,\K)$,
we have $f_0g\in A$.
Since $g=1$ on $V$, we obtain
$f_0-f_0g=0$ on $V$.
Thus $x_0\notin\supp(f_0-f_0g)$.
By the choice of $x_0$, this implies
that $f_0-f_0g\notin\F_y$.
Hence $\HT(f_0-f_0g)(y)=0$.
The additivity of $\HT$ gives
$\HT(f_0)(y)=\HT(f_0g)(y)$.
Since $f_0\in\F_y$, we obtain
$\HT(f_0g)(y)=\HT(f_0)(y)\neq0$,
which implies $f_0g\in\F_y$.
Therefore
\[
x_0
\in\bigcap_{f\in\F_y}\supp(f)
\subset\supp(f_0g)
\subset\supp(g)
\subset U.
\]
Since $U$ was arbitrary
and $\X$ is Hausdorff, we obtain
$\bigcap_{f\in\F_y}\supp(f)=\{x_0\}$.
\end{proof}

By Lemma~\ref{unique},
for each $y\in Y$,
the intersection
$\bigcap_{f\in\F_y}\supp(f)$
consists of a single point.
Hence we define a map
$\sigma:Y\to\X$
by
\[
\{\sigma(y)\}
=
\bigcap_{f\in\F_y}\supp(f)
\qquad (y\in Y).
\]
By the definition of $\sigma$,
if $f\in\F_y$ then $\sigma(y)\in\supp(f)$.
That is,
\begin{equation}\label{supp}
\HT(f)(y)\neq0
\quad\Longrightarrow\quad
\sigma(y)\in\supp(f)
\end{equation}
for every $f\in A$ and $y\in Y$.
Equivalently,
\[
\sigma(y)\notin\supp(f)
\quad\Longrightarrow\quad
\HT(f)(y)=0
\]
for every $f\in A$ and $y\in Y$.

The map $\sigma$ records the point
at which functions in $A$ must have support
whenever their images do not vanish at $y$.
The next lemma shows that
this assignment is continuous.

\begin{lem}\label{conti}
    The map $\sigma\colon Y\to \X$ is continuous. 
\end{lem}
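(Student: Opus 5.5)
We argue by contradiction, using the standard net argument for support maps of separating maps. Fix $y_0\in Y$, set $x_0=\sigma(y_0)\in\X$, and let $U$ be an open neighborhood of $x_0$ in $\X$. We must find an open neighborhood $W$ of $y_0$ in $Y$ with $\sigma(W)\subset U$.

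The key auxiliary fact is a \emph{localization} property. Choose open $V$ with $x_0\in V\subset\ol V\subset U$, and use Urysohn's lemma to get $g\in C(\X,\R)$ with $g=1$ on $\ol V$ and $\supp(g)\subset U$. Pick $f_0\in\F_{y_0}$. Exactly as in the proof of Lemma~\ref{unique}, $x_0\notin\supp(f_0-f_0g)$, so by \eqref{supp} we get $\HT(f_0-f_0g)(y_0)=0$. Additivity then gives $h:=f_0g\in\F_{y_0}$, and $\supp(h)\subset\supp(g)\subset U$.

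Since $\HT(h)\in B$ is continuous on $\Y$, the set
\[
W=\{y\in Y:\HT(h)(y)\neq0\}
\]
is an open neighborhood of $y_0$ in $Y$. For every $y\in W$ we have $h\in\F_y$, hence \eqref{supp} yields $\sigma(y)\in\supp(h)$.

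The remaining issue, which I expect to be the main (if mild) obstacle, is that $\supp(h)\subset\supp(g)$ is only contained in $\ol U$-type sets a priori. We need $\supp(h)$ to lie inside $U$, not merely in its closure. This is handled by the choice of $g$: we take $\supp(g)\subset U$ with $\supp(g)$ closed. Such a $g$ exists because, by regularity of $\X$, we can choose an open $V'$ with $\ol V\subset V'\subset\ol{V'}\subset U$ and apply Urysohn's lemma to the disjoint closed sets $\ol V$ and $\X\setminus V'$. This gives $\supp(g)\subset\ol{V'}\subset U$. Hence $\sigma(W)\subset\supp(h)\subset U$, which proves continuity of $\sigma$ at $y_0$.

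Since $y_0$ was arbitrary, $\sigma$ is continuous. Note that no case distinction is needed for $x_0=\xinf$: the argument works for any point of the compact Hausdorff space $\X$, and $f_0g\in A$ because $A$ is an ideal of $C(\X,\K)$.
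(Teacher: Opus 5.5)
Your proof is correct, though the opening sentence mislabels it: what follows is a direct argument, not a proof by contradiction, and no nets appear.

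You and the paper differ in how you produce witnesses whose supports lie in $U$. The paper uses only the defining property $\bigcap_{f\in\F_{y_0}}\supp(f)=\{\sigma(y_0)\}\subset U$ together with compactness of $\X\setminus U$. This extracts finitely many $f_1,\dots,f_n\in\F_{y_0}$ with $\bigcap_j\supp(f_j)\subset U$, and the paper then takes $\bigcap_j\coz(\HT(f_j))$ as the neighborhood of $y_0$. You instead reuse the localization mechanism from the proof of Lemma~\ref{unique}. Multiplying a fixed $f_0\in\F_{y_0}$ by an Urysohn function $g$ and invoking additivity and \eqref{supp} gives a single $h=f_0g\in\F_{y_0}$ with $\supp(h)\subset U$. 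So one cozero set suffices and no finite-subcover step is needed.

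The paper's route is the generic one for support maps: the continuity step itself needs nothing beyond the singleton-intersection property and compactness. Yours leans again on the additivity of $\HT$ and the ideal property of $A$. In exchange it yields the slightly stronger fact that $\F_{y_0}$ contains functions supported in any prescribed neighborhood of $\sigma(y_0)$.

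Your extra paragraph constructing $g$ with $\supp(g)\subset U$ via an intermediate open set is correct, but it is not a real obstacle. It is just the standard form of Urysohn's lemma that the paper already invokes in Lemma~\ref{unique}.
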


\begin{proof}
    Let $U$ be an open subset of $\X$. 
    We show that $\sigma^{-1}(U)$ is an open subset of $Y$. 
If $U=\X$, then $\sigma^{-1}(U)=Y$, and there is nothing to prove.
Thus we may assume $U\neq\X$.
    Fix $y_0\in \sigma^{-1}(U)$.
    The definition of $\sigma(y_0)$ shows that
$\bigcap_{f\in \F_{y_0}}\supp(f)
=\set{\sigma(y_0)}\subset U$,
which implies that
\[
\X\setminus U
\subset\bigcup_{f\in\F_{y_0}}(\X\setminus\supp(f)).
\]
   Since  $\X\setminus U$ is compact,
    there exist
    $f_1,\dots,f_n\in\F_{y_0}$
    such that
  \[
  \X\setminus U
    \subset
    \bigcup_{j=1}^n
    (\X\setminus \supp(f_j)).
    \]
    Hence we have
    \[
    \bigcap_{j=1}^n\supp(f_j)\subset U.
    \]
Set $O=\bigcap_{j=1}^n\coz(\HT(f_j))$. 
Since $f_j\in\F_{y_0}$, we have
$\HT(f_j)(y_0)\neq0$
for $j=1,\dots,n$.
Thus, $O$ is an open neighborhood of $y_0$.

We prove $O\subset\sigma^{-1}(U)$.
Fix $y\in O$.
Then $\HT(f_j)(y)\neq0$ for $j=1,\dots,n$
by definition.
It follows from \eqref{supp} that
$\sigma(y)\in\supp(f_j)$ for $j=1,\dots,n$. 
Hence
\[
\sigma(y)\in \bigcap_{j=1}^n\supp(f_j)\subset U.
\]
Thus $y_0\in O\subset\sigma^{-1}(U)$.
Since $y_0\in\sigma^{-1}(U)$ was chosen arbitrarily,
$\sigma^{-1}(U)$ is open.
Therefore $\sigma$ is continuous.
\end{proof}

Recall that the inverse map $\HT^{-1}$
of $\HT$ is also additive and separating
by Proposition~\ref{AdditiveSeparating}. 
Hence, by applying similar arguments to
$\HT^{-1}$, we obtain a map
$\tau\colon X\to\Y$ defined by
$\set{\tau(x)}=\bigcap_{u\in\G_x}\supp(u)$
for every $x\in X$, where 
\[
\G_x=\{u\in B:\HT^{-1}(u)(x)\neq 0\}.
\] 
Moreover, the same argument as in
Lemma~\ref{conti} shows that
$\tau$ is continuous.
For each $u\in B$ and $x\in X$,
we have
\begin{equation}\label{eq:tauproperty}
\HT^{-1}(u)(x)\neq0
\quad\Longrightarrow\quad
\tau(x)\in\supp(u).
\end{equation}

\begin{lem}\label{lem:denseopen}
The set $\sigma^{-1}(X)$ is a dense open subset of $Y$.
\end{lem}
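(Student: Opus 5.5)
Openness comes first and is immediate. Since $X$ is open in $\X$ and $\sigma\colon Y\to\X$ is continuous by Lemma~\ref{conti}, the preimage $\sigma^{-1}(X)$ is open in $Y$. The real task is density.

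For density, fix a non-empty open set $W\subset Y$. We must produce $y\in W$ with $\sigma(y)\ne\xinf$. Suppose, for contradiction, that $\sigma(y)=\xinf$ for every $y\in W$. Choose $y_0\in W$ and use Urysohn's lemma to get $u\in B$ with $u(y_0)\neq0$ and $\supp(u)\subset W$; in particular $u$ vanishes off $W$. Set $f=\HT^{-1}(u)\in A$. Then $f\neq0$, so some $x_0\in X$ has $f(x_0)\neq0$.

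Next choose $h\in A$ with $h(x_0)\neq 0$ and $\supp(h)$ a compact subset of $X$, so that $\xinf\notin\supp(h)$. We split $f$ using $h$.

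\textbf{Claim:} $\HT(fh)=0$. For $y\in W$ we have $\sigma(y)=\xinf\notin\supp(fh)$, so the consequence of \eqref{supp} gives $\HT(fh)(y)=0$. For $y\notin W$ we want to exploit that $u$ is supported in $W$. Take $g\in C(\X,\R)$ equal to $1$ on $\supp(h)$ with compact support in $X$. Then $fh\cdot(f-fg)$ is not obviously zero, so this route does not work directly. Instead I would use separation through $\HT^{-1}$.

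Pick $v\in B$ with $v=1$ on a neighbourhood of $\ol{\Y\setminus \ol{W'}}$, with $W'$ a smaller neighbourhood of $\supp u$ and $uv=0$. Then $uv=0$ gives $f\cdot \HT^{-1}(v)=0$. This controls values only on the support of $\HT^{-1}(v)$, not pointwise at $y\notin W$. So I would organise the argument by evaluating $\HT(fh)$ off $W$ through a decomposition: write $fh = f\cdot h$ and compare with $u$. The cleanest route is to show $\HT(fh)\HT(k)=0$ for every $k\in A$ with $kf=0$. That makes $\coz\HT(fh)$ disjoint from $\coz\HT(k)$ for all such $k$, and these cozero sets (images of functions separated from $f$) should cover $Y\setminus\ol{W}$. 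The reason is that $\HT^{-1}(v)f=0$ holds for all $v$ supported away from $\supp u$.

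Granting the claim, $\HT(fh)=0$, hence $fh=0$ by injectivity, contradicting $f(x_0)h(x_0)\neq0$. Thus $W\cap\sigma^{-1}(X)\neq\emptyset$, and $\sigma^{-1}(X)$ is dense.

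The main obstacle is showing $\HT(fh)$ vanishes outside $W$. Here I would try the covering argument just sketched, or alternatively use the symmetric map $\tau$ together with \eqref{eq:tauproperty}. One would show $\tau(x_0)\in\supp u\subset \ol W$, using that $\HT^{-1}(u)(x_0)\ne0$. Then one would derive a contradiction from $\sigma\equiv\xinf$ near $\tau(x_0)$ by testing with functions concentrated near $x_0$ and near $\tau(x_0)$, combining \eqref{supp} and \eqref{eq:tauproperty}.
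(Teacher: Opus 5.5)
Your proof is correct, but it is organised differently from the paper's. The step you call the main obstacle is already closed by the covering argument you sketch, so the attempt with $g$ and the $\tau$ alternative can be dropped.

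Stated precisely, the covering step is as follows. For $y\in Y\setminus\supp(u)$, pick $v\in B$ with $v(y)=1$ and $v=0$ on $\supp(u)$. Then $uv=0$, so separation of $\HT^{-1}$ gives $f\,\HT^{-1}(v)=0$, hence $(fh)\,\HT^{-1}(v)=0$. Separation of $\HT$ then gives $\HT(fh)\,v=0$, so $\HT(fh)(y)=0$.

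This covers $Y\setminus\supp(u)$, not just $Y\setminus\ol W$ as you wrote. Since $\supp(u)\subset W$, together with $W$ it exhausts $Y$. With $Y\setminus\ol W$ alone you would need continuity to handle the boundary points of $W$.

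The contradiction hypothesis is also unnecessary. Since $fh\neq0$, injectivity gives $\HT(fh)\neq0$, and $\HT(fh)$ vanishes off $\supp(u)$. So some $y_1\in\supp(u)\cap Y\subset W$ has $\HT(fh)(y_1)\neq0$, and \eqref{supp} yields $\sigma(y_1)\in\supp(fh)\subset X$ directly.

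The paper reaches this point more economically. It takes $f_0\in A$ with $\supp(f_0)\subset X$ and $f_0(x_0)=1$, so that $f_0\HT^{-1}(u)\neq0$. The contrapositive of separation for $\HT^{-1}$, applied to the pair $\HT(f_0),u$, then gives $\HT(f_0)u\neq0$ at once. A point $y_0$ with $\HT(f_0)(y_0)u(y_0)\neq0$ lies in the given neighbourhood and satisfies $\sigma(y_0)\in\supp(f_0)\subset X$.

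So the paper needs a single separation step and never has to localise $\HT(fh)$ to $\supp(u)$. Your route uses separation of both $\HT$ and $\HT^{-1}$ plus injectivity. In exchange it proves a support-compatibility fact that is stronger than the lemma requires: $\HT$ of any multiple of $\HT^{-1}(u)$ vanishes off $\supp(u)$.
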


\begin{proof}
By Lemma~\ref{conti},
$\sigma\colon Y\to\X$ is continuous.
Thus $\sigma^{-1}(\{\xinf\})$ is a closed subset of $Y$.
Hence
$\sigma^{-1}(X)
=Y\setminus\sigma^{-1}(\{\xinf\})$
is open in $Y$.

We prove that $\sigma^{-1}(X)$ is dense in $Y$.
Let $y\in Y$, and let $O$ be an open neighborhood of $y$ in $Y$.
Since $Y$ is open in $\Y$, the set $O$ is also open in $\Y$.
By Urysohn's lemma, there exists $u\in C(\Y,\R)$ such that
\[
u(y)=1,\qquad
\supp(u)\subset O.
\]
Then $u\in B$.

Since $u\neq0$ and $\HT$ is bijective,
we have $\HT^{-1}(u)\neq0$.
As $\HT^{-1}(u)\in A$, there exists $x_0\in X$ such that
\[
\HT^{-1}(u)(x_0)\neq0.
\]
Since $X$ is open in $\X$, by Urysohn's lemma there exists
$f_0\in C(\X,\R)$ such that
\[
f_0(x_0)=1,\qquad
\supp(f_0)\subset X.
\]
Then $f_0\in A$, and
$f_0\HT^{-1}(u)\neq0$.
Since $\HT^{-1}$ is separating, this implies
$\HT(f_0)u\neq0$.
Hence there exists $y_0\in Y$ such that
\[
\HT(f_0)(y_0)\neq0,
\qquad
u(y_0)\neq0.
\]
By \eqref{supp}, we have
$\sigma(y_0)\in\supp(f_0)\subset X$.
Thus $y_0\in\sigma^{-1}(X)$.
Moreover, since $u(y_0)\neq0$ and $\supp(u)\subset O$,
we have $y_0\in O$.
Therefore
$\sigma^{-1}(X)\cap O\neq\emptyset$.
Since $y\in Y$ and the open neighborhood $O$ of $y$ were arbitrary,
$\sigma^{-1}(X)$ is dense in $Y$.
\end{proof}

We next compare the two maps $\sigma$ and $\tau$.
This comparison shows that
$\sigma$ is injective on $\sigma^{-1}(X)$.

\begin{lem}\label{bijection}
The restriction
$\sigma|_{\sigma^{-1}(X)}\colon \sigma^{-1}(X)\to X$
is injective.
\end{lem}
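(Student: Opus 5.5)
The plan is to show that $\tau(\sigma(y))=y$ for every $y\in\sigma^{-1}(X)$. Injectivity then follows at once: if $\sigma(y_1)=\sigma(y_2)=x\in X$, then $y_1=\tau(x)=y_2$.

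Fix $y\in\sigma^{-1}(X)$ and put $x=\sigma(y)\in X$. Suppose, for a contradiction, that $\tau(x)\neq y$. Since $\Y$ is Hausdorff, choose disjoint open sets $V\ni y$ and $W\ni\tau(x)$ in $\Y$, and shrink $V$ so that $V\subset Y$. By Urysohn's lemma, choose $u\in B$ with $u(y)=1$ and $\supp(u)\subset V$.

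Let $f=\HT^{-1}(u)\in A$, so that $\HT(f)(y)=1\neq0$. The aim is to produce functions $f_1,f_2\in A$ with the following properties:
\begin{itemize}
\item $f=f_1+f_2$;
\item $x\notin\supp(f_1)$;
\item $f_2$ is concentrated near $x$, in a sense that forces $\HT(f_2)$ to vanish near $y$.
\end{itemize}

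For the splitting, I use the continuity of $\tau$ at $x$ (Lemma~\ref{conti} applied to $\HT^{-1}$). It gives an open neighborhood $N$ of $x$ in $X$ with $\tau(N)\subset W$. Take $g\in C(\X,\R)$ with $g=1$ near $x$ and $\supp(g)\subset N$, and set $f_1=f(1-g)$ and $f_2=fg$.

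Since $g=1$ near $x$, we have $x\notin\supp(f_1)$. Because $\sigma(y)=x$, the contrapositive of \eqref{supp} gives $\HT(f_1)(y)=0$. By additivity, $\HT(f_2)(y)=\HT(f)(y)=1$.

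It remains to show $\HT(f_2)(y)=0$. Put $v=\HT(f_2)$. For every $x'\in X$ with $\HT^{-1}(v)(x')=f_2(x')\neq0$, we have $x'\in N$. Property \eqref{eq:tauproperty} then gives $\tau(x')\in\supp(v)$, and $\tau(x')\in W$. This alone does not show $v(y)=0$; I need the converse localization: if $v$ is nonzero at a point, then $\HT^{-1}(v)$ is nonzero somewhere whose $\tau$-image lies in $\supp(v)$ near that point.

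The cleaner route uses separating directly. Take $h\in B$ with $h(y)\neq0$ and $\supp(h)\subset V$, so that $\supp(h)\cap\ol W=\emptyset$. I then aim to prove $\HT^{-1}(h)\,f_2=0$. It suffices to show $\HT^{-1}(h)=0$ on $N$. Suppose $\HT^{-1}(h)(x')\neq0$ for some $x'\in N$. By \eqref{eq:tauproperty}, $\tau(x')\in\supp(h)\subset\ol V$. But $\tau(x')\in W$, and $W\cap\ol V=\emptyset$ once $V$ is chosen with closure disjoint from $W$. This is a contradiction.

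Hence $\HT^{-1}(h)f_2=0$. Since $\HT$ is separating (Proposition~\ref{AdditiveSeparating}), $h\,\HT(f_2)=0$, so $\HT(f_2)(y)=0$. This contradicts $\HT(f_2)(y)=1$.

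The main obstacle is choosing the neighborhoods in the right order:
\begin{itemize}
\item first, disjoint open sets around $y$ and $\tau(x)$, with $\ol V\cap W=\emptyset$ (regularity of $\Y$);
\item then $N$, using continuity of $\tau$;
\item then the cutoff $g$.
\end{itemize}
Note that $\tau(x)$ may equal $\yinf$; this causes no trouble, since $W$ is only required to be open in $\Y$. With this construction, the function $u$ is not even needed: it suffices to take any $f\in\F_y$.
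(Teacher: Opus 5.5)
Your proof is correct, but it is organized differently from the paper's.

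\textbf{The paper's route.} It proves $\tau(\sigma(y))=y$ directly. Take $u\in\G_x$ and a neighborhood $O$ of $y$, and let $v\in B$ be a bump at $y$ with $\supp(v)\subset O$. Then \eqref{supp} gives $x\in\supp(\HT^{-1}(v))$. Since $\coz(\HT^{-1}(u))$ is an open neighborhood of $x$, it meets $\coz(\HT^{-1}(v))$. The separating property of $\HT^{-1}$ then yields $uv\neq0$, so $\coz(u)\cap O\neq\emptyset$. Hence $y$ lies in every $\supp(u)$ with $u\in\G_x$, and the singleton definition of $\tau(x)$ forces $y=\tau(x)$. This uses neither the continuity of $\tau$ nor \eqref{eq:tauproperty}.

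\textbf{Your route.} You argue by contradiction and need four ingredients:
\begin{itemize}
\item the continuity of $\tau$ at $x$, which the paper only asserts ``by the same argument as in Lemma~\ref{conti}'';
\item the property \eqref{eq:tauproperty};
\item a cutoff decomposition $f=f_1+f_2$;
\item the separating property of $\HT$.
\end{itemize}
All the steps check out, and the argument makes the geometry explicit: near $x$, $\tau$ lands in $W$, away from $y$. The cost is dependence on an extra lemma, and the argument is longer than the paper's.

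\textbf{A shortening.} Keep $f=\HT^{-1}(u)$ and take $h=u$. Your own computation then shows that $f$ itself vanishes on the neighborhood $N$ of $x$. So $x\notin\supp(f)$, and \eqref{supp} gives $\HT(f)(y)=0$, contradicting $u(y)=1$. The splitting $f=f_1+f_2$ and the final appeal to separation become unnecessary. In this sense your closing remark that $u$ is not needed is the wrong way round: keeping $u$ is what makes the argument shortest.

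\textbf{A minor point.} $\ol V\cap W=\emptyset$ is automatic once $V$ and $W$ are disjoint and $W$ is open. No separate regularity step is required.
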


\begin{proof}
Let $y\in \sigma^{-1}(X)$, and put $x=\sigma(y)$.
Then $x\in X$.
We first prove that
\[
y\in\bigcap_{u\in\G_x}\supp(u).
\]
Let $u\in\G_x$.
We show that $y\in\supp(u)$.
Let $O$ be an arbitrary open neighborhood of $y$ in $\Y$.
Since $y\in Y$ and $Y$ is open in $\Y$,
we may assume that $O\subset Y$.
By Urysohn's lemma, there exists $v\in C(\Y,\R)$ such that
\[
v(y)=1,\qquad
\supp(v)\subset O\subset Y.
\]
Then $v\in B$.
Since
$\HT(\HT^{-1}(v))(y)=v(y)=1$,
\eqref{supp} yields
$\sigma(y)\in\supp(\HT^{-1}(v))$.
Thus
\[
x\in\supp(\HT^{-1}(v)).
\]
On the other hand, since $u\in\G_x$, we have
$\HT^{-1}(u)(x)\neq0$.
Hence $\coz(\HT^{-1}(u))$ is an open neighborhood of $x$.
Since $x\in\supp(\HT^{-1}(v))$, it follows that
\[
\coz(\HT^{-1}(u))\cap\coz(\HT^{-1}(v))\neq\emptyset.
\]
Therefore
$\HT^{-1}(u)\HT^{-1}(v)\neq0$.
Since $\HT^{-1}$ is separating, we have $uv\neq0$.
Hence
\[
\coz(u)\cap\coz(v)\neq\emptyset.
\]
Since $\supp(v)\subset O$, we obtain
\[
\emptyset\neq\coz(u)\cap\coz(v)
\subset \coz(u)\cap O.
\]
Thus $\coz(u)\cap O\neq\emptyset$.
Since $O$ was arbitrary, we conclude that
$y\in\supp(u)$.
As $u\in\G_x$ was arbitrary,
\[
y\in\bigcap_{u\in\G_x}\supp(u)=\{\tau(x)\}.
\]
Therefore
$y=\tau(x)=\tau(\sigma(y))$.

We now prove that $\sigma$ is injective on $\sigma^{-1}(X)$.
Let $y_1,y_2\in\sigma^{-1}(X)$ and suppose that
$\sigma(y_1)=\sigma(y_2)$.
By the identity just proved,
\[
y_1
=\tau(\sigma(y_1))
=\tau(\sigma(y_2))
=y_2.
\]
Hence the restriction
$\sigma|_{\sigma^{-1}(X)}\colon \sigma^{-1}(X)\to X$
is injective.
\end{proof}

These results provide the support-theoretic
framework needed in the next section,
where we prove the boundedness of the 
additive evaluation maps
associated with $\HT$.

\section[The boundedness of the induced map]{The boundedness of $\HT$}
\label{sect:bounded}

In this section, we prove the boundedness of the induced map $\HT$.
For each $y\in Y$, define
\begin{equation}\label{phiy}
\phi_y(f)=\HT(f)(y)\qquad(f\in A).
\end{equation}
Since $\HT$ is additive, so is $\phi_y$.

In what follows, an additive map $\phi\colon A\to\K$ is said to be
bounded if there exists a constant $C>0$ such that
\[
|\phi(f)|\le C\norm{f}
\qquad(f\in A).
\]
Similarly, the additive map $\HT\colon A\to B$
is said to be bounded if
there exists a constant $M>0$
such that
\[
\norm{\HT(f)}\leq M\norm{f}
\qquad(f\in A).
\]
The main point of this section is to prove that each evaluation map
$\phi_y$ is bounded. Once this is known, these additive maps are
real-linear, and the boundedness of $\HT$ will follow from the
Banach--Steinhaus theorem.

\subsection*{Isolated points}

We first consider evaluation maps at isolated points of $Y$.
This discussion is relevant only when $Y$ has isolated points.
At such points, the support construction gives a much more rigid
description of the corresponding evaluation map.

In the rest of this subsection,
let $p$ be an isolated point of $Y$.
We denote by $e_p\colon\Y\to\K$
the function taking the value $1$ at $p$
and $0$ elsewhere.
Since $p$ is an isolated point of $Y$,
we have $e_p\in B$.

The following lemma shows that an isolated point of $Y$ corresponds,
through the support construction, to an isolated point of $\X$.

\begin{lem}\label{isolated}
$\sigma(p)$ is
an isolated point of $\X$.
Moreover, we have $\sigma(p)\in X$.
\end{lem}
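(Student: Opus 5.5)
The plan is to work with $f_p=\HT^{-1}(e_p)\in A$. This function is nonzero, and $\HT(f_p)(p)=1$, so $f_p\in\F_p$. By \eqref{supp}, $\sigma(p)\in\supp(f_p)$. The key claim is that $\coz(f_p)$ is the single point $\sigma(p)$. Granting this, $\sigma(p)\in\supp(f_p)=\ol{\coz(f_p)}$ forces $\coz(f_p)=\{\sigma(p)\}$. Hence $\{\sigma(p)\}$ is open in $\X$, so $\sigma(p)$ is isolated. Moreover $\coz(f_p)\subset X$ because $f_p(\xinf)=0$, which gives $\sigma(p)\in X$.

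To prove the claim, suppose $\coz(f_p)$ contains two distinct points $x_1,x_2$. Both lie in $X$, since $f_p(\xinf)=0$. Choose open sets $W_1\ni x_1$ and $W_2\ni x_2$ in $X$ with disjoint closures. By Urysohn's lemma, pick $g_i\in C(\X,\R)$ with $g_i(x_i)=1$ and $\supp(g_i)\subset W_i\subset X$. Then $g_i\in A$, $g_1g_2=0$, and $f_pg_i\neq0$.

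Since $\HT$ is separating, $\HT(g_1)\HT(g_2)=0$, so at least one of the values $\HT(g_i)(p)$ vanishes; say $\HT(g_1)(p)=0$. Next I would use the separating property of $\HT^{-1}$ to show that $\HT(g_1)$ vanishes identically. Since $p$ is isolated, $e_p\,\HT(g_1)=\HT(g_1)(p)e_p=0$. Applying separation of $\HT^{-1}$ to the pair $e_p,\HT(g_1)$ gives $f_p g_1=\HT^{-1}(e_p)\HT^{-1}(\HT(g_1))=0$. This contradicts $f_pg_1\neq0$, since $f_pg_1(x_1)=f_p(x_1)\ne0$.

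Actually the contradiction only needs $\HT(g_1)(p)=0$, because that alone gives $e_p\HT(g_1)=0$. So the argument is uniform: whichever $g_i$ has $\HT(g_i)(p)=0$ yields $f_pg_i=0$, contradicting $x_i\in\coz(f_p)$. Therefore $\coz(f_p)$ is a single point, and since $f_p\ne0$ that point exists and equals $\sigma(p)$, as argued in the first paragraph.

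I expect no serious obstacle. The only care needed is to place the Urysohn functions' supports inside $X$ so that they belong to $A$, and to note that $e_p\HT(g_1)=0$ uses only the value of $\HT(g_1)$ at $p$, because $e_p$ vanishes off $p$.
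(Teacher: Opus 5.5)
Your proof is correct and follows essentially the paper's route. Both arguments work with $\HT^{-1}(e_p)$ and use separation of $\HT^{-1}$ against $e_p$ to show that a bump $g$ at a point of $\coz(\HT^{-1}(e_p))$ must satisfy $\HT(g)(p)\neq0$. The paper feeds each such bump into \eqref{supp} to get $\sigma(p)\in\supp(g)\subset U$ for every neighborhood $U$. You instead rule out two points via disjoint bumps and separation of $\HT$, then identify the surviving point with $\sigma(p)$ by applying \eqref{supp} once to $\HT^{-1}(e_p)\in\F_p$.
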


\begin{proof}
Set $f=\HT^{-1}(e_p)$.
We prove
$\coz(f)\subset\{\sigma(p)\}$.
Let $x\in\coz(f)$ and 
let $U$ be an arbitrary open neighborhood
of $x$ in $\X$.
Since $X$ is open in $\X$,
we may assume that $U\subset X$.
    By Urysohn's lemma, there exists
$g\in C(\X,\R)$ such that 
\begin{equation*}
g(x)=1,\qquad
\supp(g)\subset U.
\end{equation*}
Since $x\in\coz(f)$, we have
$fg\neq0$.
As $\supp(g)\subset U\subset X$,
we have $g\in A$.
Since $\HT^{-1}$ is separating
and $fg\neq0$,
we have $\HT(f)\HT(g)\neq0$.
In particular, $\HT(g)(p)\neq0$.
Applying \eqref{supp},
we obtain $\sigma(p)\in\supp(g)\subset U$.
Since $U$ was arbitrary and $\X$ is Hausdorff,
we conclude that $\sigma(p)=x$.
Therefore $\coz(f)\subset\{\sigma(p)\}$.

Since $e_p\neq 0$
and $\HT(0)=0$, we have 
$f=\HT^{-1}(e_p)\neq 0$, and thus $\coz(f)\neq \emptyset$.  
Since $\emptyset\neq\coz(f)\subset\{\sigma(p)\}$,
we obtain
$\{\sigma(p)\}=\coz(f)$.
Since $\coz(f)$ is open in $\X$,
the singleton $\{\sigma(p)\}$ is open in $\X$.
We conclude that $\sigma(p)$ is an isolated point of $\X$.

Since $e_p\in B$, we have
$f=\HT^{-1}(e_p)\in A$.
Thus $\{\sigma(p)\}=\coz(f)\subset X$,
and consequently $\sigma(p)\in X$.
\end{proof}

By Lemma~\ref{isolated},
$\sigma(p)\in X$ is an isolated point of $\X$.
Hence the function $e_{\sigma(p)}$ is continuous on $\X$
and vanishes at $\xinf$.
Thus $e_{\sigma(p)}\in A\setminus\{0\}$.

We record the values of $\HT$ at $p$ for functions of the form
$\lambda e_{\sigma(p)}$.
Define a map $\ap\colon \K\to\K$ by
\[
\ap(\lam)=\HT(\lam e_{\sigma(p)})(p)
\qquad(\lam\in\K).
\]
Since $\HT$ is additive, so is $\ap$.

The following lemma shows that the image of each scalar multiple
of $e_{\sigma(p)}$ is supported at the single point $p$, and records
the norm relations satisfied by $\ap$.

\begin{lem}\label{isolatedform}
    The following identities hold for every $\lam\in\K$:
\[
\HT(\lam e_{\sigma(p)})=\ap(\lam)e_p,\qquad
\norm{\HT(\lam e_{\sigma(p)})}=|\ap(\lam)|,\qquad
|\ap(\lam^2)|=|\ap(\lam)|^2.
\]
In particular, $|\ap(1)|=1$.

If, in addition, $\K=\C$, then
we also have
\[
|\ap(i)|=1,\qquad
|\ap(\ol{\lam})|=|\ap(\lam)|
\qquad(\lam\in\C).
\]
\end{lem}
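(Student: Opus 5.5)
The plan is to first prove that $\HT(\lam e_{\sigma(p)})$ vanishes off $p$; the two norm statements then follow from the norm identities for $\HT$. Write $x_p=\sigma(p)$ and fix $\lam\in\K$. Let $y\in Y$ with $y\neq p$. By Lemma~\ref{isolated} and its proof, $\{x_p\}=\coz(\HT^{-1}(e_p))$. Since $x_p$ is isolated, we have $\supp(\lam e_{x_p})\subset\{x_p\}$.

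\textbf{Step 1: vanishing off $p$.} By \eqref{supp}, if $\HT(\lam e_{x_p})(y)\neq0$ then $\sigma(y)=x_p$. Now split on whether $y$ lies in $\sigma^{-1}(X)$.
\begin{itemize}
\item If $y\in\sigma^{-1}(X)$, then $p$ is also in $\sigma^{-1}(X)$, because $\sigma(p)\in X$. Lemma~\ref{bijection} then forces $y=p$, a contradiction.
\item If $y\notin\sigma^{-1}(X)$, then $\sigma(y)=\xinf\neq x_p$, again a contradiction.
\end{itemize}
Hence $\HT(\lam e_{x_p})(y)=0$ for all $y\neq p$. Since also $\HT(\lam e_{x_p})(\yinf)=0$, we get $\HT(\lam e_{x_p})=\ap(\lam)e_p$, and therefore $\norm{\HT(\lam e_{x_p})}=|\ap(\lam)|$.

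\textbf{Step 2: the squaring identity.} Apply the product identity with $f=g=\lam e_{x_p}$, noting $fg=\lam^2e_{x_p}$. This gives
\[
|\ap(\lam^2)|=\norm{\HT(\lam^2e_{x_p})}=\norm{(\ap(\lam)e_p)^2}=|\ap(\lam)|^2 .
\]

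\textbf{Step 3: $|\ap(1)|=1$.} Taking $\lam=1$ gives $|\ap(1)|=|\ap(1)|^2$, so $|\ap(1)|\in\{0,1\}$. It cannot be $0$: otherwise $\HT(e_{x_p})=0=\HT(0)$, and injectivity would give $e_{x_p}=0$. Hence $|\ap(1)|=1$.

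\textbf{Step 4: the complex case.} Assume $\K=\C$. The conjugation condition $\norm{\HT(\ol f)}=\norm{\HT(f)}$, applied to $f=\lam e_{x_p}$ (so that $\ol f=\ol\lam e_{x_p}$), gives $|\ap(\ol\lam)|=|\ap(\lam)|$. For $\lam=i$, Step 2 gives $|\ap(i)|^2=|\ap(-1)|$. Additivity gives $\ap(-1)=-\ap(1)$, so $|\ap(-1)|=1$ and hence $|\ap(i)|=1$.

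The one delicate point is Step 1: excluding $y$ with $\sigma(y)=x_p$. This rests on combining the injectivity of Lemma~\ref{bijection} with the fact, from Lemma~\ref{isolated}, that $\sigma(p)\in X$. Everything else follows directly from the norm identities and additivity.
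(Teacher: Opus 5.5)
Your proposal is correct and follows essentially the same route as the paper. The argument runs: vanishing of $\HT(\lam e_{\sigma(p)})$ off $p$ via \eqref{supp}, Lemma~\ref{isolated} and Lemma~\ref{bijection}; then the product norm identity with $e_{\sigma(p)}^2=e_{\sigma(p)}$; injectivity for $\ap(1)\neq0$; and in the complex case the conjugation condition together with $\ap(-1)=-\ap(1)$.

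The differences are cosmetic. Your case split on $y\notin\sigma^{-1}(X)$ is redundant, since $\sigma(y)=\sigma(p)\in X$ already places $y$ in $\sigma^{-1}(X)$. Your remark about $\coz(\HT^{-1}(e_p))$ is never used.
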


\begin{proof}
    We first show that
$\HT(\lam e_{\sigma(p)})=\ap(\lam)e_p$
for every $\lam\in\K$. 
    If $\lam=0$, the identity follows immediately
from $\HT(0)=0$.
    Assume that $\lam\neq0$.
    The definition of $\ap$ shows that 
    \[
\HT(\lam e_{\sigma(p)})(p)
=\ap(\lam)=\ap(\lam)e_p(p).
    \]
We prove that $\HT(\lam e_{\sigma(p)})=0$
on $Y\setminus\{p\}$.
Suppose that $y\in Y$ satisfies
$\HT(\lam e_{\sigma(p)})(y)\neq0$. 
    It follows from \eqref{supp} that
$\sigma(y)\in\supp(\lam e_{\sigma(p)})$. 
    Since $\lam\neq0$, we have
$\coz(\lam e_{\sigma(p)})
=\coz(e_{\sigma(p)})=\set{\sigma(p)}$.
Hence
\[
\supp(\lam e_{\sigma(p)})
=\set{\sigma(p)}.
\]
Therefore $\sigma(y)=\sigma(p)$.
By Lemma~\ref{isolated},
$\sigma(p)\in X$;
hence $y,p\in\sigma^{-1}(X)$.
Since $\sigma$ is injective
on $\sigma^{-1}(X)$ by Lemma~\ref{bijection},
we obtain $y=p$.
This shows that
$\HT(\lam e_{\sigma(p)})=0$ on $Y\setminus\{p\}$.
Since $\HT(\lam e_{\sigma(p)})\in B$, it also vanishes at $\yinf$.
Hence
\[
\HT(\lam e_{\sigma(p)})=0
\quad\text{on } \Y\setminus\{p\}.
\]
Since $e_p=0$ on $\Y\setminus\{p\}$,
we obtain
\[
\HT(\lam e_{\sigma(p)})=\ap(\lam)e_p
\qquad\mbox{on $\Y\setminus\{p\}$}.
\]
Since $\HT(\lam e_{\sigma(p)})(p)=\ap(\lam)e_p(p)$,
we conclude that $\HT(\lam e_{\sigma(p)})=\ap(\lam)e_p$.
Taking norms, we obtain
$\norm{\HT(\lam e_{\sigma(p)})}
=|\ap(\lam)|$ for every $\lambda\in\K$.

Since $(e_{\sigma(p)})^2=e_{\sigma(p)}$, the product norm identity
for $\HT$ gives
\[
    |\ap(\lam^2)|
=\norm{\HT(\lam^2e_{\sigma(p)})}
=\norm{\HT\big((\lam e_{\sigma(p)})^2\big)}
=\norm{\HT(\lam e_{\sigma(p)})}^2
=|\ap(\lam)|^2.
\]

Finally we prove $|\ap(1)|=1$.
Substituting \(\lambda=1\) into the identity
\(|\ap(\lambda^2)|=|\ap(\lambda)|^2\) yields
\[
    |\ap(1)|=|\ap(1)|^2.
\]
Thus $|\ap(1)|\in\{0,1\}$.
By the identity already proved,
$\HT(e_{\sigma(p)})=\ap(1)e_p$.
Since $e_{\sigma(p)}\neq0$
and $\HT$ is injective,
we have $\HT(e_{\sigma(p)})\neq\HT(0)=0$.
Thus $\ap(1)e_p\neq0$,
which yields $\ap(1)\neq0$.
Since $|\ap(1)|\in\{0,1\}$,
we conclude $|\ap(1)|=1$.

Now we consider the case where
$\K=\C$.
Substituting $\lam=i$ into
the preceding equality yields
$|\ap(-1)|=|\ap(i)|^2$.
By the additivity of $\ap$,
we have $\ap(-1)=-\ap(1)$.  
Hence $|\ap(i)|^2=|\ap(-1)|=|\ap(1)|=1$,
and thus $|\ap(i)|=1$.

Recall that
$\norm{\HT(\ol{f})}=\norm{\HT(f)}$
for every $f\in A$.
By the identity obtained above,
\[
|\ap(\ol{\lam})|
=\norm{\HT(\ol{\lam}e_{\sigma(p)})}
=\norm{\HT(\ol{\lam e_{\sigma(p)}})}
=\norm{\HT(\lam e_{\sigma(p)})}
=|\ap(\lam)|
\]
for every $\lam\in \C$. 
\end{proof}

The next lemma shows that the value of $\HT(f)$ at $p$ depends only
on the value of $f$ at the single point $\sigma(p)$.

\begin{lem}\label{formoff}
    The identity $\HT(f)(p)=\ap\big(f(\sigma(p))\big)$
holds for every $f\in A$.
\end{lem}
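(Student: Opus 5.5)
The plan is to split $f$ into a part concentrated at the isolated point $\sigma(p)$ and a part vanishing there. Let $f\in A$ and put $\lam=f(\sigma(p))$. By Lemma~\ref{isolated}, $\sigma(p)\in X$ is isolated in $\X$, so $e_{\sigma(p)}\in A$. We decompose
\[
f=\lam e_{\sigma(p)}+\bigl(f-\lam e_{\sigma(p)}\bigr),
\]
and set $g=f-\lam e_{\sigma(p)}\in A$. By additivity of $\HT$ (Proposition~\ref{AdditiveSeparating}),
\[
\HT(f)(p)=\HT(\lam e_{\sigma(p)})(p)+\HT(g)(p)=\ap(\lam)+\HT(g)(p),
\]
where the first term is $\ap(\lam)$ directly by the definition of $\ap$ (or by Lemma~\ref{isolatedform}). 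It therefore suffices to show $\HT(g)(p)=0$.

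By construction, $g(\sigma(p))=f(\sigma(p))-\lam=0$, so $\sigma(p)\notin\coz(g)$. Since $\{\sigma(p)\}$ is open in $\X$, its complement is closed and contains $\coz(g)$. Hence it also contains the closure, that is, $\sigma(p)\notin\supp(g)$. The contrapositive form of \eqref{supp} then gives $\HT(g)(p)=0$, which completes the argument.

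I do not expect a real obstacle. The only delicate point is passing from $g(\sigma(p))=0$ to $\sigma(p)\notin\supp(g)$. For a non-isolated point this would fail, and it is exactly where Lemma~\ref{isolated} is used. An alternative route uses separation: $g\,e_{\sigma(p)}=0$ gives $\HT(g)\HT(e_{\sigma(p)})=0$, and $\HT(e_{\sigma(p)})(p)=\ap(1)\neq0$ by Lemma~\ref{isolatedform}, so again $\HT(g)(p)=0$. Either route suffices, and I would present the support argument via \eqref{supp} as the primary one.
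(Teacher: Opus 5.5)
Your proposal is correct and matches the paper's proof essentially step for step: the same decomposition $f=\lam e_{\sigma(p)}+g$, the use of Lemma~\ref{isolated} to get $\sigma(p)\notin\supp(g)$, and the contrapositive of \eqref{supp} combined with additivity. Your remark that the first term equals $\ap(\lam)$ directly by the definition of $\ap$ is right, and your alternative argument via the separating property and $\ap(1)\neq0$ is also valid.
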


\begin{proof}
    Fix $f\in A$.
    Set $\lam=f(\sigma(p))$ and
$g=f-\lam e_{\sigma(p)}$.
    Then $g(\sigma(p))=0$. 
    Since $\sigma(p)$ is an isolated point
of $\X$ by Lemma~\ref{isolated}, 
    $\sigma(p)\notin \supp(g)$.
Hence the contrapositive of
\eqref{supp} yields $\HT(g)(p)=0$.
    The additivity of $\HT$ and
Lemma~\ref{isolatedform} show that
\[
\HT(f)(p)
=\HT(g)(p)+\HT(\lam e_{\sigma(p)})(p)
=\ap(\lam)e_p(p)
=\ap\big(f(\sigma(p))\big).
\]
This completes the proof.
\end{proof}

We normalize this scalar map in order to reduce the problem to an
additive map on $\K$ which fixes $1$.
Define a map $\bp\colon \K\to\K$ by 
\[
\bp(\lam)=\frac{\ap(\lam)}{\ap(1)}
\qquad (\lam \in \K).
\]
Then $\bp$ is well-defined,
since $\ap(1)\neq0$
by Lemma~\ref{isolatedform}.

The next lemma records the algebraic and norm-preserving properties
of $\bp$ inherited from $\ap$.

\begin{lem}\label{lem:betait}
The map $\bp\colon\K\to\K$ is an additive
map with $\bp(1)=1$ and
\begin{equation}\label{betasquared}
|\bp(\lam^2)|=|\bp(\lam)|^2
\qquad(\lambda\in\K).
\end{equation}
If, in addition, $\K=\C$,
then $\bp$ satisfies the following:
\begin{align}
&|\bp(i)|=1,
\label{bpi}\\
&|\bp(\ol{\lambda})|
=|\bp(\lambda)|
\qquad(\lambda\in\C),
\label{conjugateabsolutevalue}
\\
&\Re\bp(it)=0
\qquad(t\in\R).
\label{rebpit}
\end{align}
\end{lem}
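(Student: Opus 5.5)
Most of the statement transfers directly from Lemma~\ref{isolatedform} by dividing by $\ap(1)$; the new content is \eqref{rebpit}. Additivity of $\bp$ holds because $\ap$ is additive and $\bp$ is $\ap$ multiplied by the constant $1/\ap(1)$. Clearly $\bp(1)=\ap(1)/\ap(1)=1$. Since $|\ap(1)|=1$ by Lemma~\ref{isolatedform}, we have $|\bp(\lam)|=|\ap(\lam)|$ for every $\lam\in\K$. Hence \eqref{betasquared}, \eqref{bpi} and \eqref{conjugateabsolutevalue} are the identities $|\ap(\lam^2)|=|\ap(\lam)|^2$, $|\ap(i)|=1$ and $|\ap(\ol\lam)|=|\ap(\lam)|$ of Lemma~\ref{isolatedform}, rewritten in terms of $\bp$.

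For \eqref{rebpit}, I would combine the squaring identity with additivity and $\Q$-linearity. Fix $t\in\R$ and set $z=\bp(it)$. For rational $q$, additivity gives $\bp(q+it)=q+z$. Squaring,
\[
(q+it)^2=q^2-t^2+2qit,
\]
so additivity and $\Q$-linearity give
\[
\bp\big((q+it)^2\big)=q^2+\bp(-t^2)+2q z.
\]
By \eqref{betasquared} this yields
\[
|q^2+\bp(-t^2)+2qz|=|q+z|^2 \qquad(q\in\Q).
\]
Next I would apply \eqref{betasquared} with $\lam=it$, which gives $|\bp(-t^2)|=|z|^2$; writing $c=\bp(-t^2)$, we therefore know $|c|=|z|^2$.

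Writing $z=a+ib$, the right-hand side equals $q^2+2qa+|z|^2$. The left-hand side equals $|q^2+2qa+c+2iqb|$. Squaring both sides and comparing powers of $q$ as polynomial identities valid for all $q\in\Q$ should force the conclusion. At order $q^4$ both sides contribute $1$. At order $q^3$ both sides contribute $4a$. At order $q^2$ the left side gives $4a^2+2\Re c+4b^2$ and the right side gives $4a^2+2|z|^2$, so $\Re c=|z|^2-2b^2$. At order $q$ the left side gives $4a\Re c+4b\Im c$ and the right side gives $4a|z|^2$. Together with $|c|=|z|^2$, these relations should give $\Re c=|z|^2$ together with the constraint pinning $a=0$. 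If this bookkeeping does not close directly, I would also use \eqref{conjugateabsolutevalue} with $\lam=q+it$: it gives $|q-z|=|q+z|$ for all $q\in\Q$, i.e.\ $qa=0$, hence $a=\Re\bp(it)=0$ immediately. That conjugation route is clean, so I would present it as the main argument; it is valid because \eqref{rebpit} is claimed only when $\K=\C$.

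The main obstacle is therefore only to notice that \eqref{conjugateabsolutevalue} applied to $q+it$ with rational $q$ reduces \eqref{rebpit} to $\Re\bp(it)=0$; everything else is a direct transfer from Lemma~\ref{isolatedform} via $|\ap(1)|=1$.
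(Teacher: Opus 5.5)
Your proposal is correct and follows the paper's proof: the other assertions are transferred from Lemma~\ref{isolatedform} using $|\ap(1)|=1$, and \eqref{rebpit} comes from applying \eqref{conjugateabsolutevalue} to $q+it$ (the paper takes $q=1$), so that $|1+\bp(it)|=|1-\bp(it)|$ forces $\Re\bp(it)=0$. You should drop the squaring detour, because its coefficient comparison gives $\Re c=a^2-b^2$ rather than $|z|^2$, and all of those relations are satisfied by $c=z^2$ for every value of $a$, so the squaring identity alone cannot force $a=0$.
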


\begin{proof}
Since $\ap$ is additive, so is $\bp$.
Moreover, by definition,
$\bp(1)=1$.

By Lemma~\ref{isolatedform}
and the fact that $|\ap(1)|=1$, 
\[
    |\bp(\lam^2)|
    =
    \frac{|\ap(\lam^2)|}{|\ap(1)|}
    =
    \frac{|\ap(\lam)|^2}{|\ap(1)|^2}
    =
    |\bp(\lam)|^2
\]
for every $\lam\in \K$.

When $\K=\C$, we have also
$|\bp(i)|=1$, since
$|\ap(1)|=1=|\ap(i)|$
by Lemma~\ref{isolatedform}.
Since $|\ap(\ol{\lam})|=|\ap(\lam)|$
by Lemma~\ref{isolatedform},
we have
$|\bp(\ol{\lam})|=|\bp(\lam)|$
for every $\lam\in \C$.

Fix $t\in \R$.
By \eqref{conjugateabsolutevalue},
$|\bp(1+it)|=|\bp(1-it)|$.
Since $\bp$ is additive and $\bp(1)=1$, we obtain 
  \begin{align*}
|\bp(1+it)|^2
& =
1+2\Re\bp(it)+|\bp(it)|^2,\\
|\bp(1-it)|^2
& =
1-2\Re\bp(it)+|\bp(it)|^2.
\end{align*}
Comparing these two equalities, we obtain
$\Re\bp(it)=0$.
\end{proof}

The next step is to show that, even in the complex case, this
normalized scalar map preserves the real line.

\begin{lem}\label{beta}
$\bp(\R)\subset \R$.
\end{lem}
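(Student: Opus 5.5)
In the real case there is nothing to prove, since $\bp$ maps $\K=\R$ into $\R$ by definition. So I will assume $\K=\C$ and combine the two identities \eqref{conjugateabsolutevalue} and \eqref{rebpit} of Lemma~\ref{lem:betait} with the additivity of $\bp$.

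Fix $t\in\R$ and write $\bp(t)=a+ib$ with $a,b\in\R$. The goal is to show that $b=0$.

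First I describe $\bp$ on the imaginary axis. For each real $s$, \eqref{rebpit} gives $\Re\bp(is)=0$, so $\bp(is)=ic(s)$ for some $c(s)\in\R$. For $s=1$, \eqref{bpi} gives $|c(1)|=|\bp(i)|=1$, hence $c(1)=\pm1\neq0$.

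Next I compare $\bp(t+i)$ and $\bp(t-i)$. By \eqref{conjugateabsolutevalue}, $|\bp(t+i)|=|\bp(t-i)|$. By additivity of $\bp$, and since $\bp(-i)=-\bp(i)$,
\[
\bp(t\pm i)=\bp(t)\pm\bp(i)=a+i\bigl(b\pm c(1)\bigr).
\]
Squaring the moduli, the equality becomes
\[
a^2+\bigl(b+c(1)\bigr)^2=a^2+\bigl(b-c(1)\bigr)^2,
\]
which simplifies to $4\,b\,c(1)=0$. Since $c(1)\neq0$, it follows that $b=0$, so $\bp(t)\in\R$. As $t\in\R$ was arbitrary, $\bp(\R)\subset\R$.

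I do not expect a real obstacle here. The only point needing care is that $c(1)\neq0$, which comes from \eqref{bpi}; this is the step where the hypothesis $\norm{\HT(\ol f)}=\norm{\HT(f)}$ enters essentially, through Lemma~\ref{isolatedform}.
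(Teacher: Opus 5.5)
Your proof is correct and is essentially the paper's argument: you compare $|\bp(t+i)|=|\bp(t-i)|$ using additivity, and then use $\bp(i)\in\{\pm i\}$ (from \eqref{bpi} and \eqref{rebpit}) to show that the imaginary part of $\bp(t)$ vanishes. One small correction to your closing remark: $|\bp(i)|=1$ comes from the product norm identity alone, while the conjugation hypothesis actually enters through \eqref{conjugateabsolutevalue} and \eqref{rebpit}.
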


\begin{proof}
If $\K=\R$, there is nothing to prove.
Assume that $\K=\C$.
Let $s\in \R$.
By \eqref{conjugateabsolutevalue},
$|\bp(s+i)|=|\bp(s-i)|$.
Since $\bp$ is additive, we have
\begin{align*}
|\bp(s+i)|^2
&=
|\bp(s)|^2+2\Re\big(\bp(s)\ol{\bp(i)}\big) + |\bp(i)|^2,\\
|\bp(s-i)|^2
&=
|\bp(s)|^2-2\Re\big(\bp(s)\ol{\bp(i)}\big) + |\bp(i)|^2.
\end{align*}
Combining these equalities gives
$\Re(\bp(s)\ol{\bp(i)})=0$.
    Since $|\bp(i)|=1$ and
$\Re\bp(i)=0$ by \eqref{bpi}
and \eqref{rebpit},
we obtain
$\bp(i)\in\{\pm i\}$.
Then $\Re\big(\bp(s)\ol{\bp(i)}\big)=0$ implies
$\Im\bp(s)=0$.
    Hence $\bp(s)\in\R$.
Therefore $\bp(\R)\subset \R$.
  \end{proof}

We now determine the normalized scalar map on the real line.
Using the square identity obtained above, we first derive
monotonicity of $\bp$ on $\R$, and then use the density of $\Q$
in $\R$.

\begin{lem}\label{lem:beta_identity}
    The map $\bp$ satisfies $\bp(s)=s$
for every $s\in \R$. 
\end{lem}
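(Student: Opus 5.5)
By Lemma~\ref{beta}, the restriction $\bp|_{\R}$ is an additive map $\R\to\R$, and by Lemma~\ref{lem:betait} it satisfies $\bp(1)=1$. Since $\bp$ is additive, it is $\Q$-linear, so $\bp(q)=q$ for every $q\in\Q$. The plan is to show that $\bp$ is monotone on $\R$ and then use the density of $\Q$.

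First, $\bp$ maps non-negative reals to non-negative reals. Let $s\ge0$ and write $s=t^2$ with $t\in\R$. Then \eqref{betasquared} gives $|\bp(s)|=|\bp(t)|^2$. This only controls the modulus, so the sign must be pinned down separately. For this I use additivity: for every $q\in\Q$ with $q>0$, we have $s+q=(\sqrt{s+q})^2$. Moreover, since $\bp(s)\in\R$ and $\bp(q)=q$, we have
\[
\bp(s+q)=\bp(s)+q .
\]
This alone still gives no sign information. The better route is to argue by contradiction. Suppose $s\ge0$ and $\bp(s)<0$. Choose $q\in\Q$ with $0<q<-\bp(s)$. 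Then
\[
\bp(s+q)=\bp(s)+q<0 .
\]
This does not yet contradict anything, since $|\bp(s+q)|$ is merely a square.

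So I instead use \eqref{betasquared} in its polarized form. For real $a,b$, apply \eqref{betasquared} to $\lam=a+b$ and to $\lam=a-b$, and use additivity together with $\bp(\R)\subset\R$. Subtracting the two identities yields a relation between $\bp(ab)$ and $\bp(a)\bp(b)$ up to sign ambiguities. A cleaner approach is to take $\lam=t+q$ with $q\in\Q$:
\[
|\bp(t^2)+2q\bp(t)+q^2|=(\bp(t)+q)^2 .
\]
The right-hand side is $\bp(t)^2+2q\bp(t)+q^2$. Write $u=\bp(t^2)$ and $v=\bp(t)$; both are real, and $|u|=v^2$. If $u=-v^2$ with $v\ne0$, the identity becomes
\[
|{-v^2}+2qv+q^2|=v^2+2qv+q^2 .
\]
Choose $q$ rational and close to $-v$. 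Then the right-hand side $(v+q)^2$ is near $0$, while the left-hand side is near $|-v^2-2v^2+v^2|=2v^2>0$. This is a contradiction. Hence $u=v^2\ge0$, that is, $\bp(t^2)=\bp(t)^2\ge0$ for every real $t$. In particular $\bp(s)\ge0$ for all $s\ge0$.

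Monotonicity then follows: if $s\le s'$, additivity gives
\[
\bp(s')-\bp(s)=\bp(s'-s)\ge0 .
\]
Finally, fix $s\in\R$ and take rationals $q_1<s<q_2$. Then
\[
q_1=\bp(q_1)\le\bp(s)\le\bp(q_2)=q_2 .
\]
Letting $q_1,q_2\to s$ gives $\bp(s)=s$.

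The main obstacle is excluding the sign ambiguity in $|\bp(t^2)|=|\bp(t)|^2$. The rational-shift trick above handles it, using only $\bp(\R)\subset\R$ (Lemma~\ref{beta}), additivity, and $\bp(q)=q$ for $q\in\Q$.
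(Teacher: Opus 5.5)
Your proof is correct and follows the paper's route. You expand $|\beta_p((t+q)^2)|=(\beta_p(t)+q)^2$ by additivity with a rational shift to get $\beta_p(t^2)=\beta_p(t)^2\ge 0$, then deduce monotonicity, then conclude by density of $\mathbb{Q}$. The only difference is cosmetic: the paper takes a large $m\in\mathbb{N}$ with $\beta_p(s^2)+2m\beta_p(s)+m^2>0$ and drops the absolute value directly, whereas you rule out $\beta_p(t^2)=-\beta_p(t)^2$ by taking $q$ near $-\beta_p(t)$ (and the exploratory dead ends before that can simply be deleted).
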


\begin{proof}
By Lemma~\ref{lem:betait},
the map $\bp$ is additive and
satisfies $\bp(1)=1$.
Hence $\bp(n)=n$
for every $n\in\N$.
    Fix $s\in \R$. 
We first prove $\bp(s^2)=\bp(s)^2$.

Since $\bp$ is additive,
we have $\bp(2ns)=2n\bp(s)$,
and thus
\[
|\bp((s+n)^2)|
=|\bp(s^2+2ns+n^2)|
=|\bp(s^2)+2n\bp(s)+n^2|
\qquad(n\in\N).
\]
By Lemma~\ref{beta},
$\bp(s),\bp(s^2)\in\R$.
We can choose $m\in\N$ such that
\[
\bp(s^2)+2m\bp(s)+m^2>0.
\]
For such $m\in\N$, we have
\[
|\bp((s+m)^2)|
=\bp(s^2)+2m\bp(s)+m^2.
\]
On the other hand,
the additivity of $\bp$,
together with
\eqref{betasquared} and
Lemma~\ref{beta}, gives
\begin{equation*}
|\bp((s+m)^2)|
=|\bp(s+m)|^2
=(\bp(s)+m)^2
=\bp(s)^2+2m\bp(s)+m^2.
\end{equation*}
    Combining these two equalities shows that
$\bp(s^2)=\bp(s)^2\geq0$
for every $s\in\R$.

    We next show that $\bp$ is monotone increasing. 
For each $s,t\in\R$ with $s\leq t$, we have
\[
\bp(t-s)
=\bp\big((\sqrt{t-s})^2\big)
=\bp(\sqrt{t-s})^2\geq0.
\]
The additivity of $\bp$ gives
$\bp(t)-\bp(s)=\bp(t-s)\geq0$,
which proves that
$\bp$ is monotone increasing.

It remains to show that $\bp(s)=s$
for every $s\in \R$.
Fix $s\in \R$. 
Since $\Q$ is dense in $\R$,
for each $k\in\N$ there exist
$q_k,r_k\in\Q$ such that
\[
s-\frac{1}{k}
\leq q_k\leq s\leq r_k
\leq s+\frac{1}{k}.
\]
The additivity of $\bp$ together with
$\bp(1)=1$ implies
$\bp(q)=q$ for every $q\in\Q$.
Since $\bp$ is additive and
monotone increasing,
we have
\[
s-\frac{1}{k}
\leq
q_k
=\bp(q_k)
\leq\bp(s)
\leq
\bp(r_k)
=r_k
\leq
s+\frac{1}{k}
\]
for every $k\in\N$.
Letting $k\to\infty$, we conclude
that $\bp(s)=s$ for every $s\in\R$.
\end{proof}

We now use this description of $\bp$ to prove that $\phi_p$,
defined as in \eqref{phiy}, is bounded
for every isolated point $p\in Y$.

\begin{lem}\label{boundedisolated}
    For every isolated point $p\in Y$,
the map $\phi_p$ is bounded. 
\end{lem}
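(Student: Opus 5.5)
By Lemma~\ref{formoff}, $\phi_p(f)=\ap\big(f(\sigma(p))\big)=\ap(1)\,\bp\big(f(\sigma(p))\big)$ for every $f\in A$, and $|\ap(1)|=1$ by Lemma~\ref{isolatedform}. It therefore suffices to show that $\bp$ is bounded on $\K$, i.e.\ that $|\bp(\lam)|\le C|\lam|$ for some constant $C$. Then
\[
|\phi_p(f)|\le C|f(\sigma(p))|\le C\norm{f}.
\]

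In the real case, Lemma~\ref{lem:beta_identity} gives $\bp(s)=s$, so $|\phi_p(f)|=|f(\sigma(p))|\le\norm{f}$ and we may take $C=1$.

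In the complex case, I write $\lam=s+it$ with $s,t\in\R$. Additivity gives $\bp(\lam)=\bp(s)+\bp(it)=s+\bp(it)$. It remains to control $\bp(it)$. By \eqref{rebpit}, $\bp(it)$ is purely imaginary. The plan is to show $\bp(it)=\bp(i)t$, with $\bp(i)\in\{\pm i\}$ as obtained in the proof of Lemma~\ref{beta}.

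To do this, I apply \eqref{betasquared} to $\lam=it$. Since $(it)^2=-t^2$ is real, Lemma~\ref{lem:beta_identity} gives $\bp(-t^2)=-t^2$, hence $t^2=|\bp(it)|^2$. Therefore $\bp(it)\in\{\pm it\}$ for each $t$, and in particular $|\bp(it)|=|t|$. This already gives the bound directly:
\[
|\bp(\lam)|\le |s|+|\bp(it)|=|s|+|t|\le 2|\lam|.
\]
So $C=2$ works, and $|\phi_p(f)|\le 2\norm{f}$. (One could go further and show that the sign is independent of $t$, via additivity and $|\bp(i(t+1))|=|t+1|$, but boundedness does not need it.)

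No step looks like a real obstacle. The only point needing care is using \eqref{betasquared} together with the real identity of Lemma~\ref{lem:beta_identity} to pin down $|\bp(it)|$; everything else is a direct combination of Lemmas~\ref{formoff}, \ref{isolatedform} and \ref{lem:beta_identity}.
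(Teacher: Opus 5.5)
Your proof is correct and follows essentially the same route as the paper. You reduce via Lemma~\ref{formoff} and $|\ap(1)|=1$ to a bound on $\bp$, then use $\bp(s)=s$ and $|\bp(it)|^2=|\bp(-t^2)|=t^2$. The only difference is the last step: the paper also uses $\Re\bp(it)=0$ to get the exact identity $|\bp(\lam)|^2=s^2+t^2=|\lam|^2$, and hence $|\phi_p(f)|=|f(\sigma(p))|\le\norm{f}$. Your triangle inequality gives the constant $2$ instead, which is all the later Banach--Steinhaus step needs.
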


\begin{proof}
We first prove that $|\bp(\lam)|=|\lam|$
for every $\lam\in\K$.
If $\K=\R$, this follows immediately
from Lemma~\ref{lem:beta_identity}.
Assume now that $\K=\C$.
Let $\lam=s+it\in\C$, where $s,t\in\R$.
The additivity of $\bp$ gives
\[
\bp(\lam)=\bp(s)+\bp(it).
\]
By Lemma~\ref{lem:beta_identity}
and \eqref{rebpit},
we have $\bp(s)=s$ and $\Re\bp(it)=0$.
Thus
\[
\Re\bigl(\bp(s)\ol{\bp(it)}\bigr)
=s\Re\bp(it)=0.
\]
   Applying Lemma~\ref{lem:beta_identity}
and \eqref{betasquared}, we obtain 
\[
|\bp(it)|^2
=|\bp((it)^2)|
=|\bp(-t^2)|
=|t|^2.
\]
Combining the previous equalities gives
\[
|\bp(\lam)|^2
=|\bp(s)|^2
+2\Re\bigl(\bp(s)\ol{\bp(it)}\bigr)
+|\bp(it)|^2
=|s|^2+|t|^2
=|\lambda|^2,
\]
which yields $|\bp(\lam)|=|\lam|$
for every $\lam\in\C$.

By definition, $\bp(\lam)=\ap(\lam)/\ap(1)$.
Since $|\ap(1)|=1$ by Lemma~\ref{isolatedform},
the equality $|\bp(\lam)|=|\lam|$ implies
\[
|\ap(\lam)|=|\lam|
\qquad(\lam\in\K).
\]
Therefore, by Lemma~\ref{formoff},
we have
\[
|\phi_p(f)|
=
|\HT(f)(p)|
=
|\ap\big(f(\sigma(p))\big)|
=
|f(\sigma(p))|
\leq \norm{f}
\]
for every $f\in A$.
Hence $\phi_p$ is bounded.
\end{proof}

\subsection*{Accumulation points}

We now investigate the boundedness
of $\phi_y$ when $y$ is an accumulation point of $Y$. 
Throughout this subsection, let $y\in Y$ be an accumulation point.

We first record a boundedness consequence of the separating
property.  It shows that a summable sequence of functions with
pairwise disjoint supports cannot have images with unbounded norms.

\begin{lem}\label{akbounded}
Let $\{a_k\}_{k\in\N}$ be a sequence in $A$
with pairwise disjoint supports.
If $\sum_{j=1}^\infty\norm{a_j}<\infty$,
then the sequence $\{\norm{\HT(a_k)}\}_{k\in\N}$
is bounded.
\end{lem}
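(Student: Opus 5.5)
The plan is a direct ``gliding hump'' type argument: glue all the $a_k$ into a single element of $A$ and use the separating property to compare its image with each $\HT(a_k)$. No subsequence extraction should be needed.

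First, since $\sum_{j=1}^\infty\norm{a_j}<\infty$, the series $\sum_{j=1}^\infty a_j$ converges absolutely in the Banach space $A$, which is a closed subspace of $C(\X,\K)$. So I set
\[
f=\sum_{j=1}^\infty a_j\in A .
\]
Because the supports are pairwise disjoint, at every point $x\in\X$ at most one $a_j(x)$ is nonzero. Hence for each fixed $k$ we have $f(x)=a_k(x)$ on $\coz(a_k)$ and $f(x)=\sum_{j\ne k}a_j(x)$ elsewhere. In particular,
\[
(f-a_k)\,a_k=\Bigl(\sum_{j\neq k}a_j\Bigr)a_k=0
\]
pointwise on $\X$. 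Note that only the disjointness of cozero sets is used here, so the possible presence of $\xinf$ in several supports causes no trouble.

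Next I apply Proposition~\ref{AdditiveSeparating}. Since $\HT$ is separating, $\HT(f-a_k)\HT(a_k)=0$. Since $\HT$ is additive, $\HT(f)=\HT(f-a_k)+\HT(a_k)$. Now take any $y\in Y$ with $\HT(a_k)(y)\neq0$. The first identity forces $\HT(f-a_k)(y)=0$, and then the second gives $\HT(f)(y)=\HT(a_k)(y)$. Points where $\HT(a_k)$ vanishes, including $\yinf$, contribute nothing to $\norm{\HT(a_k)}$. Taking the supremum over $y$ therefore yields
\[
\norm{\HT(a_k)}\le\norm{\HT(f)}\qquad(k\in\N),
\]
and $\norm{\HT(f)}$ is a finite constant independent of $k$, which proves the lemma.

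I do not expect a genuine obstacle. The only points needing care are that the infinite sum really lies in $A$, and that the separating hypothesis requires the pointwise product to be zero rather than disjoint closed supports. The first is handled by completeness of $A$ and summability of the norms. The second holds automatically because the cozero sets are disjoint.
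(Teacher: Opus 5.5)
Your proposal is correct and follows the paper's proof. Both set $a=\sum_j a_j\in A$ and use $a_k(a-a_k)=0$ together with additivity and the separating property to get $\HT(a_k)\bigl(\HT(a)-\HT(a_k)\bigr)=0$, hence $\norm{\HT(a_k)}\le\norm{\HT(a)}$. The only cosmetic difference is in the last step: the paper reads the bound off from $\norm{\HT(a_k)}^2=\norm{\HT(a_k)\HT(a)}\le\norm{\HT(a_k)}\norm{\HT(a)}$, whereas you note pointwise that $\HT(a)=\HT(a_k)$ on $\coz(\HT(a_k))$, which avoids dividing by $\norm{\HT(a_k)}$.
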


\begin{proof}
Since $\sum_{j=1}^\infty\norm{a_j}<\infty$,
the series $\sum_{j=1}^\infty a_j$ converges
in $A$.
We set $a=\sum_{j=1}^{\infty}a_j$.
Hence $a\in A$.

Since the supports $\supp(a_j)$ are pairwise disjoint,
we have $a_ka_j=0$ whenever $j\neq k$. 
Using the norm convergence of the series,
we obtain, for each $k\in\N$,
\[
a_k(a-a_k)
=\sum_{j\neq k}a_ka_j=0.
\]
Since $\HT$ is additive and separating,
we obtain
$\HT(a_k)\bigl(\HT(a)-\HT(a_k)\bigr)=0$
for $k\in\N$.
Thus
\[
(\HT(a_k))^2=\HT(a_k)\HT(a)
\qquad(k\in\N).
\]
Taking the supremum norm
on both sides yields
\[
\norm{\HT(a_k)}^2
=\norm{(\HT(a_k))^2}
=\norm{\HT(a_k)\HT(a)}
\leq \norm{\HT(a_k)}\norm{\HT(a)}
\qquad(k\in\N).
\]
We have 
$\norm{\HT(a_k)}\leq\norm{\HT(a)}$
for all $k\in\N$.
Indeed, let $k\in\N$.
If $\HT(a_k)=0$, then the inequality holds.
If $\HT(a_k)\neq0$,
then the preceding inequality, dividing
by $\norm{\HT(a_k)}$, gives
$\norm{\HT(a_k)}\leq\norm{\HT(a)}$.
Thus $\{\norm{\HT(a_k)}\}_{k\in\N}$ is
bounded.
\end{proof}

We shall also need a way to separate infinitely many distinct
points of $X$ by functions with pairwise disjoint supports.
The following proposition provides such a construction.

\begin{prop}\label{propositionfunctions}
    Let $\set{x_n}_{n\in\N}$ be a sequence
of distinct points in $X$.
    Then there exist a subsequence
$\set{x_{n_k}}_{k\in \N}$ of $\set{x_n}_{n\in\N}$,
a sequence of open subsets
$\set{W_k}_{k\in\N}$ in $X$,
and a sequence $\set{h_k}_{k\in \N}$
in $A$ such that 

\begin{itemize}
\item[(i)]
$x_{n_k}\in W_k$ for every $k\in\N$,

\item[(ii)]
$h_k=1$ on $W_k$ and
$\norm{h_k}=1$ for every $k\in \N$,

\item[(iii)] $\supp(h_k)\cap \supp(h_l)
=\emptyset$ whenever $k,l\in\N$ and $k\neq l$.
\end{itemize} 
\end{prop}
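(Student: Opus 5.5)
Here is the plan: first pass to a subsequence that either converges in $\X$ or is "discrete", then construct inductively pairwise disjoint open sets $W_k$ together with slightly larger open sets $V_k$ whose closures are pairwise disjoint compact subsets of $X$.

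\emph{Step 1 (subsequence).} Since $\X$ is compact, the sequence $\{x_n\}$ has a cluster point $z\in\X$. Because the $x_n$ are distinct, at most one of them equals $z$; after discarding it, we can choose a subsequence (still denoted $\{x_n\}$) with $x_n\to z$ in $\X$ and $x_n\neq z$ for all $n$. If $\X$ is not first countable, a convergent subsequence need not exist. In that case I will avoid sequences altogether and only use that $z$ is a cluster point, selecting each next term inside a prescribed neighbourhood of $z$ during the induction. So the working hypothesis is: every neighbourhood of $z$ contains $x_n$ for infinitely many $n$, and no $x_n$ equals $z$.

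\emph{Step 2 (inductive construction).} Set $n_1=1$. Choose an open $V_1\subset X$ with $x_{n_1}\in V_1$, $\ol{V_1}$ compact, $\ol{V_1}\subset X$, and $z\notin\ol{V_1}$; this is possible by local compactness, since $X$ is open in $\X$ and $\X$ is Hausdorff. Suppose $n_1<\dots<n_k$ and $V_1,\dots,V_k$ have been chosen with pairwise disjoint closures in $X$, none containing $z$. The set $U=\X\setminus(\ol{V_1}\cup\dots\cup\ol{V_k})$ is an open neighbourhood of $z$, so it contains some $x_{n_{k+1}}$ with $n_{k+1}>n_k$. Choose an open $V_{k+1}$ with
\[
x_{n_{k+1}}\in V_{k+1}\subset\ol{V_{k+1}}\subset U\cap X,
\]
with $\ol{V_{k+1}}$ compact and $z\notin\ol{V_{k+1}}$. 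This is possible because $x_{n_{k+1}}\neq z$ and $\X$ is compact Hausdorff, hence regular.

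\emph{Step 3 (functions).} For each $k$, choose an open $W_k$ with $x_{n_k}\in W_k\subset\ol{W_k}\subset V_k$. By Urysohn's lemma in $\X$, pick $h_k\in C(\X,\R)$ with $0\le h_k\le1$, $h_k=1$ on $\ol{W_k}$, and $\supp(h_k)\subset V_k$. Then $\supp(h_k)\subset\ol{V_k}\subset X$, so $h_k(\xinf)=0$ and $h_k\in A$. We have $\norm{h_k}=1$ because $h_k(x_{n_k})=1$, which gives (i) and (ii). Property (iii) holds because the sets $\ol{V_k}$ are pairwise disjoint.

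\emph{Main obstacle.} The only delicate point is (iii) for infinitely many indices: disjointness must be preserved at every stage of the induction. The cluster point $z$ handles this, since it guarantees that the complement of the finitely many closures already chosen is a neighbourhood of $z$ and hence still contains infinitely many remaining terms. We also need $\supp(h_k)\subset X$ (rather than merely $h_k\in A$) in order for the support closure to be taken safely in $\X$. This is ensured by choosing each $\ol{V_k}$ compact inside $X$.
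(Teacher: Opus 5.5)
Your proof is correct. It reaches the result by a slightly different and more uniform route than the paper.

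The paper splits into two cases according to whether $S=\{x_n:n\in\N\}$ has an accumulation point in $X$.
\begin{itemize}
\item If there is an accumulation point $z_0\in X$, the paper runs essentially your induction. It uses shrinking neighbourhoods $V_k=V_{k-1}\setminus\ol{U_k}$ of $z_0$, starting from $V_0=X$, and picks each new term inside the current neighbourhood.
\item If there is none, the paper argues differently. Each $x_k$ has a neighbourhood $V_k\subset X$ with $V_k\cap S=\{x_k\}$. From these it builds disjoint sets $U_k$ around every term, keeping the full sequence ($n_k=k$).
\end{itemize}

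By taking the cluster point $z$ in the compactification $\X$ instead, you merge the two cases into one. When $S$ has no accumulation point in $X$, your $z$ is just $\xinf$. Your separately imposed condition $\ol{V_k}\subset X$ then does the job that $V_0=X$ and the isolating neighbourhoods do in the paper: it keeps the supports of the $h_k$ inside $X$, so that $h_k\in A$.

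Your version is shorter and uses the compactness of $\X$ directly. The paper's split gains nothing essential for its application in Lemma~\ref{unboundedproperty}, which only needs some subsequence. Its only extra feature is retaining the whole sequence in the discrete case.

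One cosmetic point: delete the opening claim that you can pass to a subsequence with $x_n\to z$. As you note yourself, this can fail in $\X$, and your argument never uses it; the cluster-point property is all you need.
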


\begin{proof}
Set $S=\set{x_n:n\in\N}$.
We first construct a subsequence
$\{x_{n_k}\}_{k\in\N}$ of $\{x_n\}_{n\in\N}$
and a sequence of pairwise disjoint
open subsets
$\{U_k\}_{k\in\N}$ of $X$ such that
$x_{n_k}\in U_k$ for every $k\in\N$.
Throughout this proof, closures are taken in $\X$.

\textbf{Step 1.}
Suppose that $S$ has an accumulation
point $z_0$ in $X$.
Set $V_0=X$ and $n_0=0$.
We inductively construct a subsequence
$\set{x_{n_k}}_{k\in\N}$,
a sequence $\set{V_k}_{k\in\N}$
of open neighborhoods of $z_0$ in $\X$,
and a sequence $\set{U_k}_{k\in\N}$ of
open subsets of $\X$ contained in
$X$ such that
\[
x_{n_k}\in U_k,\qquad
\overline{U_k}\subset V_{k-1}\setminus\{z_0\},\qquad
V_k=V_{k-1}\setminus\overline{U_k}
\]
for every $k\in\N$.
    Since $z_0$ is an accumulation point of $S$, the set
    $V_0\cap(S\setminus\set{z_0})$ is infinite.
    Choose
    \[
        x_{n_1}\in V_0\cap(S\setminus\set{z_0}).
    \]
Since $\X$ is compact Hausdorff,
there exists an
open neighborhood $U_1$ of $x_{n_1}$
in $\X$ such that
\[
\ol{U_1}\subset V_0\setminus\{z_0\}.
\]
In particular, $U_1\subset X$.
Set $V_1=V_0\setminus\overline{U_1}$.
Then $V_1$ is an open neighborhood of $z_0$.

Assume that we have chosen
$x_{n_j},U_j,V_j$ so that
\[
x_{n_j}\in U_j,\quad
n_j>n_{j-1},\qquad
\ol{U_j}\subset V_{j-1}\setminus\{z_0\},\qquad
V_j=V_{j-1}\setminus\ol{U_j}
\]
for $j=1,\dots,k$.
Since $z_0$ is an accumulation point
of $S$, $V_k$ contains
infinitely many points of
$S\setminus\{z_0\}$.
Choose
\[
x_{n_{k+1}}\in V_k\cap(S\setminus\set{z_0})
\]
with $n_{k+1}>n_k$.
Since $\X$ is compact Hausdorff,
there exists an
open neighborhood $U_{k+1}$ of $x_{n_{k+1}}$
such that
\[
\ol{U_{k+1}}\subset V_k\setminus\{z_0\}.
\]
In particular, $U_{k+1}\subset X$.
Set $V_{k+1}=V_k\setminus\overline{U_{k+1}}$.
This completes the inductive construction.

We show that the sets $U_k$ are pairwise disjoint.
Let $l,m\in\N$ with $l< m$.
Since $\ol{U_k}\subset V_{k-1}$ for every $k\in\N$,
we obtain $U_m\subset V_l$ by induction.
On the other hand,
$V_l=V_{l-1}\setminus\ol{U_l}$ shows
$U_l\cap V_l=\emptyset$.
This proves that $U_l\cap U_m=\emptyset$.
Therefore $\{U_k\}_{k\in\N}$ is pairwise disjoint.

\textbf{Step 2.}
Suppose that $S$ has no accumulation point in $X$.
We inductively construct a sequence
$\set{V_k}_{k\in\N}$
of open neighborhoods of $x_k$ in $\X$,
and a sequence $\set{U_k}_{k\in\N}$
of pairwise disjoint
open subsets of $\X$ contained in
$X$ such that
\[
x_k\in U_k,\qquad
\overline{U_k}\subset V_k,\qquad
V_k\cap S=\{x_k\},\qquad
V_k\subset X\setminus\bigcup_{j=1}^{k-1}\overline{U_j}
\]
for every $k\in\N$,
where we assume $\bigcup_{j=1}^0\ol{U_j}=\emptyset$.
Since $x_1$ is not an accumulation point of $S$ in $X$,
there exists an open neighborhood $V_1$ of $x_1$
in $\X$ such that
\[
V_1\subset X,\qquad
V_1\cap S=\{x_1\}.
\]
Since $\X$ is compact Hausdorff, there exists
an open neighborhood
$U_1$ of $x_1$ such that 
\[
\ol{U_1}\subset V_1.
\]

Suppose that we have chosen
open neighborhoods $V_n$ of $x_n$ and
pairwise disjoint
open neighborhoods $U_n$
of $x_n$ such that
\[
x_n\in U_n,\qquad
\ol{U_n}\subset V_n,\qquad
V_n\cap S=\{x_n\},
\qquad
V_n\subset
X\setminus\bigcup_{j=1}^{n-1}\overline{U_j}
\]
for $n=1,\dots,k$.
Since $\ol{U_n}\subset V_n$ and
$V_n\cap S=\{x_n\}$ for $n=1,\ldots,k$,
we have
$x_{k+1}\notin\bigcup_{n=1}^k \overline{U_n}$.
Since $x_{k+1}$ is not an accumulation point
of $S$ in $X$, there exists
an open neighborhood $V_{k+1}$ of $x_{k+1}$
in $\X$ such that
\[
V_{k+1}\cap S=\{x_{k+1}\},\qquad
V_{k+1}\subset
X\setminus\bigcup_{j=1}^k\overline{U_j}.
\]
Since $\X$ is compact Hausdorff,
there exists an
open neighborhood $U_{k+1}$
of $x_{k+1}$ such that
\[
\ol{U_{k+1}}\subset V_{k+1}.
\]
Then $\ol{U_{k+1}}\subset
X\setminus\bigcup_{n=1}^k\overline{U_n}$.
Thus the sets $U_1,\ldots,U_{k+1}$ are
pairwise disjoint.
This completes the induction.
In this case, we set
$n_k=k$ for every $k\in\N$.

\textbf{Step 3.}
Since $\X$ is a
compact Hausdorff space,
we can choose an
open subset $W_k$ of $\X$
such that 
    \begin{equation*}
        x_{n_k}\in W_k\subset \ol{W_k}\subset U_k
    \end{equation*}
    for each $k\in \N$.
    By Urysohn's lemma, for each $k\in \N$,
there exists $h_k\in A$ such that 
\[
h_k=1\qbox{on} \ol{W_k},\qquad
\norm{h_k}=1,\qquad
\supp(h_k)\subset U_k.
\]
    Since the sets $U_k$ are pairwise disjoint,
the sets $\supp(h_k)$ are pairwise disjoint. 
    Thus $\set{x_{n_k}}_{k\in\N}$,
$\set{W_k}_{k\in\N}$, and $\set{h_k}_{k\in\N}$
have the desired properties.
\end{proof}

We now show that, if $\phi_y$ were unbounded, then one could
construct a summable sequence of functions with pairwise disjoint
supports whose images have arbitrarily large values.
This will contradict Lemma~\ref{akbounded}.

\begin{lem}\label{unboundedproperty}
If $\phi_y$ is unbounded, then there exist
sequences $\{a_k\}_{k\in\N}$ in $A$
and $\{z_k\}_{k\in\N}$ in $Y$ such that
\begin{itemize}
\item[(i)]
$\norm{a_k}\le 2^{-k}$ for every $k\in\N$,
    
\item[(ii)]
$|\HT(a_k)(z_k)|>2^{k-1}$
for every $k\in\N$,
    
\item[(iii)]
$\supp(a_k)\cap\supp(a_l)=\emptyset$
whenever $k,l\in\N$ and $k\neq l$.
\end{itemize}
\end{lem}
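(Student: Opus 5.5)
We plan to build the three sequences by a single induction, shrinking a neighborhood of $y$ at each step; Proposition~\ref{propositionfunctions} will not be needed. The ingredients are:
\begin{itemize}
\item the $\Q$-linearity of $\HT$, so that $\phi_y$ is $\Q$-homogeneous;
\item continuity of $\HT(f)$ at $y$;
\item the density of $\sigma^{-1}(X)$ (Lemma~\ref{lem:denseopen}) and injectivity of $\sigma$ there (Lemma~\ref{bijection});
\item continuity of $\sigma$ (Lemma~\ref{conti});
\item the localization property \eqref{supp}.
\end{itemize}

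\textbf{Step 1: large values from small functions.} Since $\phi_y$ is additive and unbounded, for each $k$ there is $f\in A$ with $|\phi_y(f)|>C\norm{f}$ for arbitrarily large $C$. Multiplying by a suitable rational $q$, and using $\phi_y(qf)=q\phi_y(f)$, we obtain $f_k\in A$ with $\norm{f_k}\le 2^{-k}$ and $|\HT(f_k)(y)|>2^{k}$. By continuity of $\HT(f_k)$, there is an open neighborhood $O_k$ of $y$ on which $|\HT(f_k)|>2^{k-1}$.

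\textbf{Step 2: inductive choice of points and supports.} Put $V_0=Y$. Suppose we are at step $k$ and have an open neighborhood $V_{k-1}$ of $y$, together with open sets $U_1,\dots,U_{k-1}\subset X$ whose closures $\ol{U_j}$ are pairwise disjoint and do not contain $\sigma(y)$.
\begin{itemize}
\item We choose $O_k\subset V_{k-1}$ as in Step 1.
\item Since $y$ is an accumulation point, $O_k\setminus\{y\}$ is a non-empty open set. By Lemma~\ref{lem:denseopen} it meets $\sigma^{-1}(X)$, so we can pick $z_k\in O_k\cap\sigma^{-1}(X)$ with $z_k\neq y$.
\item We check that $\sigma(z_k)\neq\sigma(y)$. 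If $\sigma(y)=\xinf$, this is automatic since $\sigma(z_k)\in X$. If $\sigma(y)\in X$, it follows from Lemma~\ref{bijection}.
\item We arrange $\sigma(z_k)\notin\bigcup_{j<k}\ol{U_j}$ through the definition of $V_{k-1}$ below.
\item We choose an open $U_k\subset X$ containing $\sigma(z_k)$ such that $\ol{U_k}$ avoids $\sigma(y)$ and $\bigcup_{j<k}\ol{U_j}$.
\item We set $V_k=V_{k-1}\cap\sigma^{-1}(\X\setminus\ol{U_k})$. This is an open neighborhood of $y$ by Lemma~\ref{conti}, and it guarantees that all later $\sigma(z_j)$ avoid $\ol{U_k}$.
\end{itemize}

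\textbf{Step 3: the functions $a_k$.} By Urysohn's lemma, choose $h_k\in A$ with $0\le h_k\le1$, $h_k=1$ on an open neighborhood $W_k$ of $\sigma(z_k)$, and $\supp(h_k)\subset U_k$. Set $a_k=f_kh_k$.
\begin{itemize}
\item Property (i) holds, since $\norm{a_k}\le\norm{f_k}\le 2^{-k}$.
\item Property (iii) holds, since $\supp(a_k)\subset\ol{U_k}$ and these closures are pairwise disjoint.
\item For (ii), $f_k-a_k$ vanishes on $W_k$, so $\sigma(z_k)\notin\supp(f_k-a_k)$. The contrapositive of \eqref{supp}, together with additivity, gives $\HT(a_k)(z_k)=\HT(f_k)(z_k)$. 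Because $z_k\in O_k$, this has modulus greater than $2^{k-1}$.
\end{itemize}

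\textbf{Main obstacle.} The step we expect to need most care is the bookkeeping in Step 2. The point $z_k$ must be chosen after $f_k$, because it has to lie in the region $O_k$ where $\HT(f_k)$ is large. At the same time, the supports must remain pairwise disjoint. Shrinking the neighborhood $V_k$ via continuity of $\sigma$ resolves this ordering problem. We must also handle separately the case $\sigma(y)=\xinf$ and the requirement $z_k\neq y$.
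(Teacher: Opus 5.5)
Your proof is correct, but it obtains the disjoint supports by a different route from the paper's.

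\textbf{The paper's route.} It first fixes all the analytic data. It chooses $g_n$ with $\norm{g_n}\le 2^{-n}$ and $|\HT(g_n)(y)|>2^n$, and then pairwise distinct points $y_n\in\sigma^{-1}(X)\cap O_n$. By Lemma~\ref{bijection}, the points $x_n=\sigma(y_n)$ are then pairwise distinct. Only afterwards does it separate these points, using the general topological Proposition~\ref{propositionfunctions}. That proposition has a case split according to whether $\{x_n\}$ has an accumulation point in $X$, and it passes to a subsequence; the inequality $n_k\ge k$ is then needed to keep the estimates.

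\textbf{Your route.} You interleave the choices in a single induction. You use the continuity of $\sigma$ (Lemma~\ref{conti}) to shrink the neighborhood $V_k$ of $y$, so that every later $\sigma(z_j)$ automatically avoids the closed sets $\ol{U_1},\dots,\ol{U_k}$. The cost is the extra requirement $\sigma(y)\notin\ol{U_k}$, which keeps $y$ in $V_k$. You secure this correctly: via $z_k\neq y$ and Lemma~\ref{bijection} when $\sigma(y)\in X$, and trivially when $\sigma(y)=\xinf$.

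\textbf{Trade-off.} Your approach dispenses with subsequences and with Proposition~\ref{propositionfunctions} entirely, but it uses continuity of $\sigma$ in an essential way. The paper's proof of this lemma needs only the density of $\sigma^{-1}(X)$ and the injectivity of $\sigma$ there. In exchange, the paper isolates a separation statement valid for arbitrary sequences of distinct points in $X$.
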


\begin{proof}
We recall that $\phi_y(f)=\HT(f)(y)$ for $f\in A$
by \eqref{phiy}.
By assumption, $\phi_y$ is unbounded.
Then, for each $n\in\N$, there exists
$f_n\in A\setminus\{0\}$ such that
\[
|\HT(f_n)(y)|>4^n\norm{f_n}.
\]
Let $q_n\in\Q$ be such that
\[
4^{-n}|\HT(f_n)(y)|>q_n\geq\norm{f_n}>0.
\]
In particular,
\[
\frac{1}{q_n}\leq\frac{1}{\norm{f_n}},\qquad
\frac{|\HT(f_n)(y)|}{q_n}>4^n
\qquad(n\in\N).
\]
Set $g_n=f_n/(2^nq_n)$
for each $n\in\N$.
Then $g_n\in A$ and
\[
\norm{g_n}
=\frac{\norm{f_n}}{2^nq_n}
\leq\frac{\norm{f_n}}{2^n\norm{f_n}}
=2^{-n}
\qquad(n\in\N).
\]
Using the $\Q$-linearity of $\HT$ and the choice of $q_n$,
we have
\[
|\HT(g_n)(y)|
=\left|\HT\left(\frac{f_n}{2^nq_n}\right)(y)\right|
=\frac{1}{2^n}\,\frac{|\HT(f_n)(y)|}{q_n}
>\frac{4^n}{2^n}=2^n
\qquad(n\in\N).
\]
Consequently, we obtain
\begin{equation}\label{sequenceg_n}
\|g_n\|\le 2^{-n},\qquad
|\HT(g_n)(y)|>2^n
\qquad(n\in\N).
\end{equation}

\textbf{Step 1.}
We construct a sequence
$\set{y_n}_{n\in \N}$ in $Y$
such that $|\HT(g_n)(y_n)|>2^{n-1}$
for every $n\in \N$.

For each $n\in\N$, define an open subset
$O_n$ by
\[
O_n=\{z\in Y:|\HT(g_n)(z)|>2^{n-1}\}.
\]
Since $|\HT(g_n)(y)|>2^n$
by \eqref{sequenceg_n},
the set $O_n$ is an open neighborhood of $y$.
    Moreover, since $y$ is an accumulation point of $Y$,
$O_n$ is infinite.

We choose $y_n\in\sigma^{-1}(X)\cap O_n$ inductively.
By Lemma~\ref{lem:denseopen},
$\sigma^{-1}(X)$ is a dense open
subset of $Y$.
Since $O_1$ is infinite,
$O_1\setminus\{y\}$ is a non-empty
open subset of $Y$.
Thus
$\sigma^{-1}(X)\cap(O_1\setminus\{y\})
\neq\emptyset$.
We can choose
$y_1\in\sigma^{-1}(X)\cap(O_1\setminus\{y\})$.
Suppose that $y_1,\dots,y_n$ have
already been chosen so that
\[
y_k\in
\sigma^{-1}(X)\cap(O_k\setminus\{y\})
\qquad(k=1,\dots,n),
\]
and $y_k\neq y_l$ whenever $k\neq l$.
Set $O_{n+1}'=O_{n+1}\setminus\{y_1,\dots,y_n\}$.
Then $O_{n+1}'$ is an open neighborhood
of $y$. Since $O_{n+1}$ is infinite,
so is $O_{n+1}'$.
Hence $O_{n+1}'\setminus\{y\}$ is a non-empty
open subset of $Y$.
Since $\sigma^{-1}(X)$ is dense,
we have $\sigma^{-1}(X)\cap(O_{n+1}'\setminus\{y\})\neq\emptyset$.
Therefore we can choose
$y_{n+1}\in\sigma^{-1}(X)\cap(O_{n+1}'\setminus\{y\})$.
Thus
\[
y_{n+1}\in\sigma^{-1}(X)\cap(O_{n+1}\setminus\{y,y_1,\dots,y_n\}).
\]
This gives a sequence $\{y_n\}_{n\in\N}$ of
pairwise distinct points such that
$y_n\in\sigma^{-1}(X)\cap O_n$
for every $n\in\N$.

By the defining property of $O_n$, we have
\begin{equation}\label{eq:yn}
|\HT(g_n)(y_n)|>2^{n-1}
\qquad(n\in\N).
\end{equation}
Since $y_n\in\sigma^{-1}(X)$,
we have $\sigma(y_n)\in X$
for every $n\in\N$.
Moreover, since the points
$y_n$ are pairwise distinct
and $\sigma$ is injective on
$\sigma^{-1}(X)$ by Lemma~\ref{bijection},
the points $\sigma(y_n)$ are pairwise distinct.

\textbf{Step 2.}
We construct a sequence $\set{a_k}_{k\in \N}$
in $A$ and a subsequence $\set{y_{n_k}}_{k\in\N}$
of $\{y_n\}_{n\in\N}$ with the required properties
(i) through (iii).

    Set $x_n=\sigma(y_n)$ for each $n\in \N$.
    By Proposition~\ref{propositionfunctions},
we obtain a subsequence $\set{x_{n_k}}_{k\in \N}$
of $\set{x_n}_{n\in\N}$,
a sequence of open subsets
$\set{W_k}_{k\in\N}$ of $X$,
and a sequence $\set{h_k}_{k\in \N}$
in $A$ such that
\[
x_{n_k}\in W_k,\qquad
h_k=1\qbox{on}W_k,\qquad
\norm{h_k}= 1
\]
for every $k\in \N$,
and the sets $\supp(h_k)$ are pairwise disjoint.

Set
\[
a_k=g_{n_k}h_k
\qquad(k\in \N).
\]
Then $a_k\in A$.
Since $\{x_{n_k}\}_{k\in\N}$ is a subsequence
of $\{x_n\}_{n\in\N}$, we obtain
$k\leq n_k$ for every $k\in\N$.
By \eqref{sequenceg_n}, we have 
\[
\norm{a_k}
\leq \norm{g_{n_k}}\,\norm{h_k}
=\norm{g_{n_k}}
\leq 2^{-n_k}\leq 2^{-k}
\qquad(k\in\N).
\]
Thus the sequence $\{a_k\}_{k\in\N}$
satisfies (i).

Since $\supp(a_k)\subset\supp(h_k)$
for every $k\in\N$,
and the sets $\supp(h_k)$ are pairwise disjoint,
we have
$\supp(a_k)\cap\supp(a_l)=\emptyset$
for every $k,l\in\N$ with $k\neq l$.
This proves (iii).

Let $k\in\N$.
Since $h_k=1$ on $W_k$,
we have 
\[
(g_{n_k}-a_k)(x)
=g_{n_k}(x)(1-h_k(x))=0
\qquad(x\in W_k).
\]
Since $x_{n_k}\in W_k$,
we obtain
$\sigma(y_{n_k})=x_{n_k}
\notin\supp(g_{n_k}-a_k)$. 
By the contrapositive of
\eqref{supp}, we have
$\HT(g_{n_k}-a_k)(y_{n_k})=0$.
The additivity of $\HT$ yields
$\HT(a_k)(y_{n_k})=\HT(g_{n_k})(y_{n_k})$.
By \eqref{eq:yn}, we have
$|\HT(g_{n_k})(y_{n_k})|>2^{n_k-1}$.
Hence
\[
|\HT(a_k)(y_{n_k})|
=|\HT(g_{n_k})(y_{n_k})|
>2^{n_k-1}
\geq2^{k-1}.
\]
Setting $z_k=y_{n_k}$,
the sequence $\set{z_k}_{k\in\N}$
satisfies (ii).
\end{proof}

We can now rule out the possibility that $\phi_y$ is unbounded
at an accumulation point.

\begin{lem}\label{boundedaccumulation}
If $y$ is an accumulation point of $Y$,
then $\phi_y$ is bounded.
\end{lem}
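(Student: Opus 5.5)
We argue by contradiction, combining Lemma~\ref{unboundedproperty} with Lemma~\ref{akbounded}. Suppose that $\phi_y$ is unbounded at the accumulation point $y$. Lemma~\ref{unboundedproperty} then gives sequences $\{a_k\}_{k\in\N}$ in $A$ and $\{z_k\}_{k\in\N}$ in $Y$ such that $\norm{a_k}\le 2^{-k}$, $|\HT(a_k)(z_k)|>2^{k-1}$, and the supports $\supp(a_k)$ are pairwise disjoint.

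Because $\norm{a_k}\le 2^{-k}$, we have $\sum_{j=1}^\infty\norm{a_j}\le 1<\infty$. Together with the pairwise disjointness of the supports, this means the hypotheses of Lemma~\ref{akbounded} hold. That lemma yields a constant $M$, namely $M=\norm{\HT(a)}$ with $a=\sum_j a_j$, such that $\norm{\HT(a_k)}\le M$ for every $k\in\N$.

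On the other hand, for each $k$,
\[
\norm{\HT(a_k)}\ge|\HT(a_k)(z_k)|>2^{k-1},
\]
so $\{\norm{\HT(a_k)}\}_{k\in\N}$ is unbounded. This contradicts the bound from Lemma~\ref{akbounded}, and therefore $\phi_y$ must be bounded.

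The main work has already been done in Lemma~\ref{unboundedproperty}. That lemma uses density of $\sigma^{-1}(X)$ and injectivity of $\sigma$ to produce distinct points $\sigma(y_n)\in X$, which in turn allows the disjoint cutoffs of Proposition~\ref{propositionfunctions}. Given that, the present statement needs only the observation that the evaluation of $\HT(a_k)$ at a single point $z_k$ is controlled by the supremum norm. No further obstacle is expected.
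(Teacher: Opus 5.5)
Your proof is correct and follows the paper's argument: you assume $\phi_y$ is unbounded, take the sequences from Lemma~\ref{unboundedproperty}, note that $\sum_k\norm{a_k}<\infty$, and reach a contradiction with Lemma~\ref{akbounded} via $\norm{T_0(a_k)}\ge|T_0(a_k)(z_k)|>2^{k-1}$. Naming the bound explicitly as $\norm{T_0(a)}$ with $a=\sum_j a_j$ is a correct extra detail that the paper leaves implicit.
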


\begin{proof}
Suppose that $\phi_y$ is unbounded.
By Lemma~\ref{unboundedproperty},
there exist sequences $\{a_k\}_{k\in\N}$
in $A$ and $\{z_k\}_{k\in\N}$ in $Y$
such that
\[
\norm{a_k}\le 2^{-k},
\qquad
|\HT(a_k)(z_k)|>2^{k-1}
\]
for every $k\in\N$, and the supports
of the functions $a_k$ are
pairwise disjoint.
In particular,
$\sum_{k=1}^{\infty}\norm{a_k}<\infty$
and
$\norm{\HT(a_k)}
\ge |\HT(a_k)(z_k)|
>2^{k-1}$
for every $k\in\N$.
This contradicts Lemma~\ref{akbounded}.
\end{proof}

\section{Proof of the main results}
\label{sect:proof}

\begin{proof}
[\textbf{Proof of Theorems~\ref{thm:complex} and \ref{thm:real}}]
We first show that $\phi_y\colon A\to\K$,
defined as in \eqref{phiy}, is a bounded
real-linear map for every $y\in Y$.
For each $y\in Y$, the boundedness of $\phi_y$
follows from Lemmas~\ref{boundedisolated}
and \ref{boundedaccumulation},
since every point of $Y$
is either isolated or an accumulation point.
It remains to prove that $\phi_y$ is real-linear.
Since $\HT$ is additive, so is $\phi_y$
by \eqref{phiy}.
Thus $\phi_y$ is $\Q$-linear.
For each $s\in\R$, there exists a sequence
$\{q_n\}_{n\in\N}$ in $\Q$ such that
$q_n\to s$ as $n\to\infty$.
Since $\phi_y$ is bounded, there exists $C_y>0$ such that
\[
|\phi_y(g)|\leq C_y\norm{g}
\qquad(g\in A).
\]

Since $\phi_y$ is
$\Q$-linear, for every $f\in A$ we have
\begin{align*}
|\phi_y(sf)-s\phi_y(f)|
&\leq
|\phi_y(sf)-\phi_y(q_nf)|
+|q_n\phi_y(f)-s\phi_y(f)|\\
&\leq
C_y\norm{(s-q_n)f}
+|q_n-s|\,C_y\norm{f}\\
&\to0
\qquad(n\to\infty).
\end{align*}
This proves that $\phi_y$ is real-linear.

For each $f\in A$, we have
\[
\sup_{y\in Y}|\phi_y(f)|
=\sup_{y\in Y}|\HT(f)(y)|
=\norm{\HT(f)}<\infty.
\]
Thus the family $\{\phi_y\}_{y\in Y}$ is a pointwise
bounded family of bounded
real-linear maps from $A$, regarded as a real
Banach space, into
the real Banach space $\K$.
Applying the Banach--Steinhaus theorem,
there exists $M>0$ such that
\[
|\phi_y(f)|\le M\norm{f}
\qquad(f\in A,\ y\in Y).
\]
    Taking the supremum over $y\in Y$, we obtain 
\begin{equation*}
\norm{\HT(f)}
=\sup_{y\in Y}|\HT(f)(y)|
=\sup_{y\in Y}|\phi_y(f)|
\leq M\norm{f}
\qquad(f\in A),
\end{equation*}
which implies that $\HT$ is bounded.
Since $\HT$ is bounded and additive, the same
approximation argument used above for $\phi_y$
shows that $\HT$ is real-linear.

    We next show that $\HT$ is norm-preserving.
Since $\HT$ is additive, we have
$\HT(0)=0$.
Fix an arbitrary $f\in A\setminus\{0\}$.
We prove $\norm{\HT(f)}=\norm{f}$.
By induction, using the product norm identity, we obtain
\[
\norm{\HT(f^{2^n})}=\norm{\HT(f)}^{2^n}
\qquad(n\in\N).
\]
    Hence
    \begin{equation*}
        \norm{\HT(f)}^{2^n}
        =\norm{\HT(f^{2^n})}
        \leq \norm{\HT}\,\norm{f^{2^n}}
        =\norm{\HT}\,\norm{f}^{2^n}.
    \end{equation*}
    Taking the $2^n$th root of both sides yields
$\norm{\HT(f)}\leq \norm{\HT}^{1/2^n}\norm{f}$.
    Letting $n\to \infty$, we obtain
    $\norm{\HT(f)}\leq \norm{f}$.

Since $\HT$ is a bounded real-linear bijection
between Banach spaces,
the open mapping theorem implies that $\HT^{-1}$ is bounded.
    Hence there exists $C>0$ such that
$\norm{f}\leq C\norm{\HT(f)}$.
By the product norm identity for $\HT$,
we have
\[
\norm{f}^{2^n}
=\norm{f^{2^n}}
\leq C\norm{\HT(f^{2^n})}
=C\norm{\HT(f)}^{2^n}
\qquad(n\in\N).
\]
    Taking the $2^n$th root of both sides, we obtain
$\norm{f}\leq C^{1/2^n}\norm{\HT(f)}$. 
    Letting $n\to \infty$, we have $\norm{f}\leq\norm{\HT(f)}$. 
Combining the two inequalities gives
$\norm{\HT(f)}=\norm{f}$.
Therefore $\HT\colon A\to B$ is
a surjective real-linear isometry.

For each $f\in A$, we have $f|_X\in C_0(X,\K)$
and
\[
\HT(f)(y)=
\begin{cases}
T(f|_X)(y),&y\in Y,\\
0,&y=\yinf,
\end{cases}
\]
by \eqref{HT}.
Through the above isometric identifications of
$C_0(X,\K)$ with $A$ and $C_0(Y,\K)$ with $B$,
it follows that
$T\colon C_0(X,\K)\to C_0(Y,\K)$ is a surjective
real-linear isometry.

When $\K=\C$, it follows from
\cite[Theorem~1.1]{miurareallinear} that
there exist a continuous function
$w\colon Y\to\T$,
a homeomorphism $\varphi\colon Y\to X$,
and a closed and open subset
$Y_0\subset Y$ such that
\[
T(f)(y)=
\begin{cases}
w(y)f(\varphi(y)),&y\in Y_0,\\
w(y)\ol{f(\varphi(y))},&y\in Y\setminus Y_0,
\end{cases}
\]
for every $f\in C_0(X,\C)$ and $y\in Y$.

Conversely, suppose that
$T\colon C_0(X,\C)\to C_0(Y,\C)$
is a map of the above form.
A direct verification shows that
$T$ is a ring isomorphism in norm
and satisfies $\norm{T(\ol{f})}=\norm{T(f)}$
for every $f\in C_0(X,\C)$.

When $\K=\R$, the Banach--Stone theorem
shows that there exist a continuous function
$w\colon Y\to\{\pm1\}$ and a homeomorphism
$\varphi\colon Y\to X$ such that
\[
T(f)(y)=w(y)f(\varphi(y))
\]
for every $f\in C_0(X,\R)$ and $y\in Y$.

Conversely, suppose that
$T\colon C_0(X,\R)\to C_0(Y,\R)$
is a map of the above form.
A direct verification shows that
$T$ is a ring isomorphism in norm.
\end{proof}

\begin{rem*}
The homeomorphism $\varphi$ appearing
in the final representation
coincides with the map $\sigma$ constructed
in Section~\ref{sect:additive}.
Indeed, let $y\in Y$ and $f\in\F_y$.
Put $g=f|_X$.
Then $g\in C_0(X,\K)$ and
\[
T(g)(y)=\HT(f)(y)\neq0.
\]
By the final representation of $T$,
in both the real and complex cases,
this implies that
$g(\varphi(y))\neq0$.
Since $g=f|_X$ and $\varphi(y)\in X$, we have
$f(\varphi(y))\neq0$.
Thus
\[
\varphi(y)\in\coz(f)\subset\supp(f)
\qquad(f\in\F_y).
\]
Since $f\in\F_y$ was arbitrary, it follows that
$\varphi(y)\in\bigcap_{f\in\F_y}\supp(f)=\{\sigma(y)\}$.
Therefore $\varphi(y)=\sigma(y)$ for every $y\in Y$.
\end{rem*}

\subsection*{Acknowledgments}
The second author was supported by JST SPRING,
Grant Number JPMJSP2121.
The third author was supported by JSPS KAKENHI
Grant Number JP25K07028.

\end{document}